\theoremstyle{definition}
\newtheorem{definition}{Definition}[section]
\newtheorem{lemma}[definition]{Lemma}
\newtheorem{theorem}[definition]{Theorem}
\newtheorem{corollary}[definition]{Corollary}
\newtheorem{remark}[definition]{Remark}
\newtheorem{problem}[definition]{Problem}
\newcommand{\N}{\mathbb{N}}
\newcommand{\Npos}{{\mathbb{N}_+}}
\newcommand{\Z}{\mathbb{Z}}
\newcommand{\R}{\mathbb{R}}
\newcommand{\Rpos}{\mathbb{R}_{>0}}
\newcommand{\abs}[1]{\vert #1 \vert} 
\newcommand{\id}{\mathrm{id}} 
\newcommand{\lfast}{{\leftarrow}}
\newcommand{\rfast}{{\rightarrow}}
\newcommand{\lslow}{{\swarrow}}
\newcommand{\rslow}{{\searrow}}
\newcommand{\wall}{{\|}}
\newcommand{\ord}{{\mathcal{O}}}
\newcommand{\digs}{\Sigma} 
\newcommand{\Mul}{\Pi} 
\newcommand{\mul}{g} 
\newcommand{\rfig}[1]{Figure~\ref{#1}} 
\newcommand{\rlem}[1]{Lemma~\ref{#1}}
\DeclareMathOperator{\fractional}{frac} 
\DeclareMathOperator{\cyl}{Cyl} 
\DeclareMathOperator{\config}{config} 
\DeclareMathOperator{\real}{real} 
\DeclareMathOperator{\integ}{int} 
\begin{document}

\title{On computing the Lyapunov exponents of reversible cellular automata\thanks{The work was partially supported by the Academy of Finland grant 296018 and by the Vilho, Yrjö and Kalle Väisälä Foundation}}

\author{Johan Kopra}

\affil{Department of Mathematics and Statistics, \\FI-20014 University of Turku, Finland}
\affil{jtjkop@utu.fi}

\date{}

\maketitle

\setcounter{page}{1}

\begin{abstract}
We consider the problem of computing the Lyapunov exponents of reversible cellular automata (CA). We show that the class of reversible CA with right Lyapunov exponent $2$ cannot be separated algorithmically from the class of reversible CA whose right Lyapunov exponents are at most $2-\delta$ for some absolute constant $\delta>0$. Therefore there is no algorithm that, given as an input a description of an arbitrary reversible CA $F$ and a positive rational number $\epsilon>0$, outputs the Lyapunov exponents of $F$ with accuracy $\epsilon$. We also compute the average Lyapunov exponents (with respect to the uniform measure) of the CA that perform multiplication by $p$ in base $pq$ for coprime $p,q>1$.
\end{abstract}

\providecommand{\keywords}[1]{\textbf{Keywords:} #1}
\noindent\keywords{Cellular automata, Lyapunov exponents, Reversible computation, Computability}

\section{Introduction}

A cellular automaton (CA) is a model of parallel computation consisting of a uniform (in our case one-dimensional) grid of finite state machines, each of which receives input from a finite number of neighbors. All the machines use the same local update rule to update their states simultaneously at discrete time steps. The following question of error propagation arises naturally: If one changes the state at some of the coordinates, then how long does it take for this change to affect the computation at other coordinates that are possibly very far away? Lyapunov exponents provide one tool to study the asymptotic speeds of error propagation in different directions. The concept of Lyapunov exponents originally comes from the theory of differentiable dynamical systems, and the discrete variant of Lyapunov exponents for CA was originally defined in~\cite{She92}.

The Lyapunov exponents of a cellular automaton $F$ are interesting also when one considers $F$ as a topological dynamical system, because they can be used to give an upper bound for the topological entropy of $F$ \cite{Tis00}. It is previously known that the entropy of one-dimensional cellular automata is uncomputable \cite{HKC92} (and furthermore from \cite{GZ12} it follows that there exists a single cellular automaton whose entropy is uncomputable), which gives reason to suspect that also the Lyapunov exponents are uncomputable in general.

The uncomputability of Lyapunov exponents is easy to prove for (not necessarily reversible) cellular automata by using the result from \cite{Kari92} which says that nilpotency of cellular automata with a spreading state is undecidable. We will prove the more specific claim that the Lyapunov exponents are uncomputable even for reversible cellular automata. In the context of proving undecidability results for reversible CA one cannot utilize undecidability of nilpotency for non-reversible CA. An analogous decision problem, the (local) immortality problem, has been used to prove undecidability results for reversible CA \cite{Luk10}. We will use in our proof the undecidability of a variant of the immortality problem, which in turn follows from the undecidability of the tiling problem for $2$-way deterministic tile sets. This result has been published previously in the proceedings of UCNC 2019~\cite{Kop19}.

In the other direction, there are interesting classes of cellular automata whose Lyapunov exponents have been computed. In \cite{DMM03} a closed formula for the Lyapunov exponents of linear one-dimensional cellular automata is given. We present results concerning the family of multiplication automata that simulate multiplication by $p$ in base $pq$ for coprime $p,q>1$. It is easy to find out their Lyapunov exponents and it is more interesting to consider a measure theorical variant, the average Lyapunov exponent. We compute the average Lyapunov exponents (with respect to the uniform measure) for these automata. This computation is originally from the author's Ph.D. thesis~\cite{KopDiss}.

\section{Preliminaries}

For sets $A$ and $B$  we denote by $B^A$ the collection of all functions from $A$ to $B$.

A finite set $A$ of \emph{letters} or \emph{symbols} is called an \emph{alphabet}. The set $A^{\Z}$ is called a \emph{configuration space} or a \emph{full shift} and its elements are called \emph{configurations}. An element $x\in A^{\Z}$ is interpreted as a bi-infinite sequence and we denote by $x[i]$ the symbol that occurs in $x$ at position $i$. A \emph{factor} of $x$ is any finite sequence $x[i] x[i+1]\cdots x[j]$ where $i,j\in\Z$, and we interpret the sequence to be empty if $j<i$. Any finite sequence $w=w[1] w[2]\cdots w[n]$ (also the empty sequence, which is denoted by $\epsilon$) where $w[i]\in A$ is a \emph{word} over $A$. If $w\neq\epsilon$, we say that $w$ \emph{occurs} in $x$ at position $i$ if $x[i]\cdots x[i+n-1]=w[1]\cdots w[n]$ and we denote by $w^\Z\in A^\Z$ the configuration in which $w$ occurs at all positions of the form $in$ ($i\in\Z$). The set of all words over $A$ is denoted by $A^*$, and the set of non-empty words is $A^+=A^*\setminus\{\epsilon\}$. More generally, for $L,K\subseteq A^*$ we denote $LK=\{w_1w_2\mid w_1\in L, w_2\in K\}$ and $L^*=\{w_1\cdots w_n\mid n\geq 0, w_i\in L\}$. If $\epsilon\notin L$, define $L^+=L^*\setminus\{\epsilon\}$ and if $\epsilon\in L$, define $L^+=L^*$. The set of words of length $n$ is denoted by $A^n$. For a word $w\in A^*$, $\abs{w}$ denotes its length, i.e. $\abs{w}=n\iff w\in A^n$.

If $A$ is an alphabet and $C$ is a countable set, then $A^C$ becomes a compact metrizable topological space when endowed with the product topology of the discrete topology of $A$ (in particular a set $S\subseteq A^C$ is compact if and only if it is closed). In our considerations $C=\Z$ or $C=\Z^2$. We define the \emph{shift} $\sigma:A^\Z\to A^\Z$ by $\sigma(x)[i]=x[i+1]$ for $x\in A^\Z$, $i\in\Z$, which is a homeomorphism. We say that a closed set $X\subseteq A^\Z$ is a \emph{subshift} if $\sigma(X)=X$. Any $w\in A^+$ and $i\in\Z$ determine a \emph{cylinder} of $X$
\[\cyl_X(w,i)=\{x\in X\mid w \mbox{ occurs in }x\mbox{ at position }i\}.\]
Every cylinder is an open set of $X$ and the collection of all cylinders
\[\mathcal{C}_X=\{\cyl_X(w,i)\mid w\in A^+, i\in\Z\}\]
form a basis for the topology of $X$.

Occasionally we consider configuration spaces $(A_1\times A_2)^\Z$ and then we may write $(x_1,x_2)\in (A_1\times A_2)^\Z$ where $x_i\in A_i^\Z$ using the natural bijection between the sets $A_1^\Z\times A_2^\Z$ and $(A_1\times A_2)^\Z$. We may use the terminology that $x_1$ is on the upper layer or on the $A_1$-layer, and similarly that $x_2$ is on the lower layer or on the $A_2$-layer.

\begin{definition}\label{cadef}
Let $X\subseteq A^\Z$ be a subshift. We say that the map $F:X\to X$ is a \emph{cellular automaton} (or a CA) on $X$ if there exist integers $m\leq a$ (memory and anticipation) and a \emph{local rule} $f:A^{a-m+1}\to A$ such that $F(x)[i]=f(x[i+m],\dots,x[i],\dots,x[i+a])$. If we can choose $f$ so that $-m=a=r\geq 0$, we say that $F$ is a radius-$r$ CA and if we can choose $m=0$ we say that $F$ is a \emph{one-sided} CA. A one-sided CA with anticipation $1$ is called a radius-$\frac{1}{2}$ CA.\end{definition}

We can extend any local rule $f:A^{a-m+1}\to B$ to words $w=w[1]\cdots w[a-m+n]\in A^{a-m+n}$ with $n\in\Npos$ by $f(w)=u=u[1]\cdots u[n]$, where $u[i]= f(w[i],\dots,w[i+a-m])$.

The CA-functions on $X$ are characterized as those continuous maps on $X$ that commute with the shift \cite{Hed69}. We say that a CA $F:X\to X$ is \emph{reversible} if it is bijective. Reversible CA are homeomorphisms on $X$. The book \cite{LM95} is a standard reference for subshifts and cellular automata on them.

For a given reversible CA $F:X\to X$ and a configuration $x\in X$ it is often helpful to consider the \emph{space-time diagram} of $x$ with respect to $F$. Formally it is the map $\theta\in A^{\Z^2}$ defined by $\theta(i,-j)=F^j(x)[i]$: the minus sign in this definition signifies that time increases in the negative direction of the vertical coordinate axis. Informally the space-time diagram of $x$ is a picture which depicts elements of the sequence $(F^i(x))_{i\in\Z}$ in such a way that $F^{i+1}(x)$ is drawn below $F^i(x)$ for every $i$.

The definition of Lyapunov exponents is from \cite{She92,Tis00}. For a fixed subshift $X\subseteq A^\Z$ and for $x\in X$, $s\in\Z$, denote $W_{s}^{+}(x)=\{y\in X \mid \forall i\geq s: y[i]=x[i]\}$ and $W_{s}^{-}(x)=\{y\in X \mid \forall i\leq s: y[i]=x[i]\}$. Then for given cellular automaton $F:X\to X$, $x\in X$, $n\in\N$, define
\begin{flalign*}
&\Lambda_{n}^{+}(x,F)=\min\{s\geq 0\mid\forall 1\leq i\leq n: F^i(W_{-s}^{+}(x))\subseteq W_0^{+}(F^i(x))\} \\
&\Lambda_{n}^{-}(x,F)=\min\{s\geq 0\mid\forall 1\leq i\leq n: F^i(W_{s}^{-}(x))\subseteq W_0^{-}(F^i(x))\}.
\end{flalign*}

These have shift-invariant versions $\overline{\Lambda}_n^{\pm}(x,F)=\max_{i\in\Z} \Lambda_n^{\pm}(\sigma^i(x),F)$. The quantities
\[\lambda^{+}(x,F)=\lim_{n\to\infty}\frac{\overline{\Lambda}_n^{+}(x,F)}{n},\qquad \lambda^{-}(x,F)=\lim_{n\to\infty}\frac{\overline{\Lambda}_n^{-}(x,F)}{n}\]
are called (when the limits exist) respectively the right and left \emph{Lyapunov exponents of} $x$ (with respect to $F$).

A global version of these are the quantities
\[\lambda^{+}(F)=\lim_{n\to\infty}\max_{x\in X}\frac{\Lambda_n^{+}(x,F)}{n},\qquad \lambda^{-}(F)=\lim_{n\to\infty}\max_{x\in X}\frac{\Lambda_n^{-}(x,F)}{n}\]
that are called respectively the right and left \emph{(maximal) Lyapunov exponents} of $F$. These limits exist by an application of Fekete's subadditive lemma (e.g. Lemma 4.1.7 in \cite{LM95}).

Occasionally we consider cellular automata from the measure theoretical point of view. For a subshift $X$ we denote by $\Sigma(\mathcal{C}_X)$ the \emph{sigma-algebra} generated by the collection of cylinders $\mathcal{C}_X$. It is the smallest collection of subsets of $X$ which contains all the elements of $\mathcal{C}_X$ and which is closed under complement and countable unions. A \emph{measure} on $X$ is a countably additive function $\mu:\Sigma(\mathcal{C}_X)\to[0,1]$ such that $\mu(X)=1$, i.e. $\mu(\bigcup_{i=0}^\infty A_i)=\sum_{i=0}^\infty\mu(A_i)$ whenever all $A_i\in \Sigma(\mathcal{C}_X)$ are pairwise disjoint. A measure $\mu$ on $X$ is completely determined by its values on cylinders. We say that a cellular automaton $F:X\to X$ is \emph{measure preserving} if $\mu(F^{-1}(S))=\mu(S)$ for all $S\in\Sigma(\mathcal{C})$.

On full shifts $A^\Z$ we are mostly interested in the \emph{uniform measure} determined by $\mu(\cyl(w,i))=\abs{A}^{-\abs{w}}$ for $w\in A^+$ and $i\in\Z$. By Theorem 5.4 in \cite{Hed69} any surjective CA $F:A^\Z\to A^\Z$ preserves this measure. For more on measure theory of cellular automata, see \cite{Piv09}.

Measure theoretical variants of Lyapunov exponents are defined as follows. Given a measure $\mu$ on $X$ and for $n\in\N$, let $I_{n,\mu}^{+}(F)=\int_{x\in X}\Lambda_n^{+}(x,F) d\mu$ and $I_{n,\mu}^{-}(F)=\int_{x\in X}\Lambda_n^{-}(x,F) d\mu$. Then the quantities
\[I_{\mu}^{+}(F)=\liminf_{n\to\infty}\frac{I_{n,\mu}^{+}(F)}{n},\qquad I_{\mu}^{-}(F)=\liminf_{n\to\infty}\frac{I_{n,\mu}^{-}(F)}{n}\]
are called respectively the right and left \emph{average Lyapunov exponents of} $F$ (with respect to the measure $\mu$).

We will write $W_{s}^{\pm}(x)$, $\Lambda_n^{\pm}(x)$, $\lambda^{+}(x)$, $I_{n,\mu}^{+}$ and $I_{\mu}^{+}$ when $X$ and $F$ are clear by the context.

\section{Tilings and Undecidability}

In this section we recall the well-known connection between cellular automata and tilings on the plane. We use this connection to prove an auxiliary undecidability result for reversible cellular automata.

\begin{definition}A \emph{Wang tile} is formally a function $t:\{N,E,S,W\}\to C$ whose value at $I$ is denoted by $t_I$. Informally, a Wang tile $t$ should be interpreted as a unit square with edges colored by elements of $C$. The edges are called \emph{north}, \emph{east}, \emph{south} and \emph{west} in the natural way, and the colors in these edges of $t$ are  $t_N,t_E,t_S$ and $t_W$ respectively. A \emph{tile set} is a finite collection of Wang tiles.\end{definition}

\begin{definition}A tiling over a tile set $T$ is a function $\eta\in T^{\Z^2}$ which assigns a tile to every integer point of the plane. A tiling $\eta$ is said to be valid if neighboring tiles always have matching colors in their edges, i.e. for every $(i,j)\in\Z^2$ we have $\eta(i,j)_N=\eta(i,j+1)_S$ and $\eta(i,j)_E=\eta(i+1,j)_W$. If there is a valid tiling over $T$, we say that $T$ \emph{admits} a valid tiling.\end{definition}

We say that a tile set $T$ is NE-deterministic if for every pair of tiles $t,s\in T$ the equalities $t_N=s_N$ and $t_E=s_E$ imply $t=s$, i.e. a tile is determined uniquely by its north and east edge. A SW-deterministic tile set is defined similarly. If $T$ is both NE-deterministic and SW-deterministic, it is said to be \emph{2-way deterministic}.

The \emph{tiling problem} is the problem of determining whether a given tile set $T$ admits a valid tiling.

\begin{theorem}{\cite[Theorem 4.2.1]{Luk10}}\label{tiling} The tiling problem is undecidable for 2-way deterministic tile sets.\end{theorem}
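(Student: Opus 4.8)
The plan is to obtain the result as a reduction from an undecidable problem about reversible Turing machines, mirroring the classical route by which Berger and Hooper derived undecidability of the general tiling problem, but carried out so that every tile is recoverable from either of its two opposite diagonal corners. The most convenient source of undecidability is the \emph{immortality problem} for reversible Turing machines: given a reversible machine $M$, decide whether there is a bi-infinite tape configuration (with head state and position) from which $M$ runs forever without halting. This problem is undecidable (Kari and Ollinger), and reversibility is precisely the feature that will let the simulating tiles be deterministic in both diagonal directions.

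Given such an $M$, I would build a tile set $T_M$ whose valid tilings encode space-time diagrams of non-halting orbits of $M$. Each tile records a local window of the diagram -- a tape symbol together with optional head/state information -- and the edge-matching rules force a north-to-south pass across a row to apply one step of the transition function of $M$. Forward determinism of $M$ then makes the row below uniquely determined by the row above, while reversibility makes the row above uniquely determined by the row below. With a suitable encoding of these local windows into edge colours, the forward (downward) determinism and the backward (upward) determinism of $M$ translate into NE-determinism and SW-determinism of the tiles respectively; the exact edge bookkeeping is routine. Thus $T_M$ is $2$-way deterministic by construction.

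The genuine difficulty is that naive simulation tiles always admit the trivial ``blank'' tiling in which no head is present, so that $T_M$ would tile the plane regardless of whether $M$ is immortal. To rule this out I would overlay a self-similar aperiodic skeleton in the spirit of Robinson's construction, forcing nested computation zones of unbounded size in each of which a head must appear and $M$ must be simulated on arbitrarily long tapes. With this overlay in place a valid tiling of the whole plane exists if and only if $M$ admits an immortal orbit, which yields the desired reduction and hence the undecidability.

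The main obstacle is making the aperiodic skeleton itself $2$-way deterministic and compatible with the reversible simulation layer. Robinson's original hierarchy is highly non-deterministic, and forcing each skeleton tile to be determined by its NE pair of edges \emph{and} by its SW pair -- while still forcing the hierarchical structure and correctly embedding the computations of $M$ at every scale -- is the delicate part of the argument. Threading determinism consistently through all levels of the hierarchy, so that neither the aperiodic forcing nor the computation layer ever breaks NE- or SW-determinism, is where essentially all of the work lies.
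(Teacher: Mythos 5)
The first thing to note is that the paper does not prove this statement at all: it is quoted directly from Lukkarila's thesis \cite{Luk10}, so there is no in-paper argument to compare against. Measured against the actual proof in the literature, your outline has the right architecture --- a simulation layer based on \emph{reversible} Turing machines to get determinism in both diagonal directions, plus a determinized Robinson-type aperiodic skeleton to exclude the head-free ``blank'' tilings --- and this is essentially how Lukkarila proceeds, except that his reduction is from the halting problem for reversible Turing machines rather than from the Kari--Ollinger immortality problem. Either source problem can be made to work; the choice only changes what the computation zones must force (a head with arbitrary tape and state, versus a computation launched from a blank initial row), so this deviation is minor.

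The genuine gap is that everything that makes the theorem hard is deferred, and partly mislabelled as routine. First, the claim that the ``edge bookkeeping'' for the machine layer is routine once $M$ is reversible is not correct. In a 2-way deterministic tile set, each tile must be recoverable from its north and east edges alone; in a naive space-time simulation, whether a head enters a given cell from the west is simply not a function of those two edges, no matter how reversible $M$ is. Repairing this requires an actual idea --- effectively one must realize the reversible machine as a reversible one-sided cellular automaton acting along diagonals (cf.\ the correspondence between complete 2-way deterministic tile sets and reversible radius-$\frac{1}{2}$ CA used in this paper), which is a nontrivial known construction (Morita-style partitioned simulations, or Lukkarila's specific encoding), not bookkeeping. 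Second, the 2-way deterministic aperiodic skeleton and its interleaving with the computation layer --- which you yourself identify as ``where essentially all of the work lies'' --- is the entire content of Lukkarila's long construction, building on the deterministic aperiodic tile sets of Kari and Papasoglu. A proposal that stops exactly at that point reproduces the plan of the known proof, but it does not prove the theorem.
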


\begin{definition}Let $T$ be a 2-way deterministic tile set and $C$ the collection of all colors which appear in some edge of some tile of $T$. $T$ is \emph{complete} if for each pair $(a,b)\in C^2$ there exist (unique) tiles $t,s\in T$ such that $(t_N,t_E)=(a,b)$ and $(s_S,s_W)=(a,b)$.\end{definition}

A 2-way deterministic tile set $T$ can be used to construct a complete tile set. Namely, let $C$ be the set of colors which appear in tiles of $T$, let $X\subseteq C\times C$ be the set of pairs of colors which do not appear in the northeast of any tile and let $Y\subseteq C\times C$ be the set of pairs of colors which do not appear in the southwest of any tile. Since $T$ is 2-way deterministic, there is a bijection $p:X\to Y$. Let $T^\complement$ be the set of tiles formed by matching the northeast corners $X$ with the southwest corners $Y$ via the bijection $p$. Then the tile set $A=T\cup T^\complement$ is complete.

Every complete 2-way deterministic tile set $A$ determines a local rule $f:A^2\to A$ defined by $f(a,b)=c\in A$, where $c$ is the unique tile such that $a_S=c_N$ and $b_W=c_E$. This then determines a reversible CA $F:A^\Z\to A^\Z$ with memory $0$ by $F(x)[i]=f(x[i],x[i+1])$ for $x\in A^\Z$, $i\in\Z$. The space-time diagram of a configuration $x\in A^\Z$ corresponds to a valid tiling $\eta$ via $\theta(i,-j)=F^j(x)[i]=\eta(i,-i-j)$, i.e. configurations $F^j(x)$ are diagonals of $\eta$ going from northwest to southeast and the diagonal corresponding to $F^{j+1}(x)$ is below the diagonal corresponding to $F^j(x)$.

\begin{definition}
A cellular automaton $F:A^\Z\to A^\Z$ is $(p,q)$-locally immortal ($p,q\in\N$) with respect to a subset $B\subseteq A$ if there exists a configuration $x\in A^\Z$ such that $F^{iq+j}(x)[ip]\in B$ for all $i\in\Z$ and $0\leq j\leq q$. Such a configuration $x$ is a $(p,q)$-witness.
\end{definition}
Generalizing the definition in \cite{Luk10}, we call the following decision problem the $(p,q)$-\emph{local immortality problem}: given a reversible CA $F:A^\Z\to A^\Z$ and a subset $B\subseteq A$, find whether $F$ is $(p,q)$-locally immortal with respect to $B$.

\begin{theorem}{\cite[Theorem 5.1.5]{Luk10}}\label{01immortal} The $(0,1)$-local immortality problem is undecidable for reversible CA.\end{theorem}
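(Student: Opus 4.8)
The plan is to reduce from the tiling problem for $2$-way deterministic tile sets, which is undecidable by \rthe{tiling}. Given such a tile set $T$, I would complete it to $A=T\cup T^\complement$ and pass to the reversible CA $F:A^\Z\to A^\Z$ supplied by the construction preceding this statement, whose orbits, read along the northwest-to-southeast diagonals, are exactly the valid $A$-tilings. The first observation is that the $(0,1)$-condition collapses to a single column: since $\{i+j : i\in\Z,\ 0\le j\le 1\}=\Z$, being $(0,1)$-locally immortal with respect to $B$ means that some $x$ satisfies $F^k(x)[0]\in B$ for all $k\in\Z$, i.e.\ that some valid $A$-tiling keeps its central vertical line $\{0\}\times\Z$ entirely inside $B$. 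The goal is then to choose $B=T$ and prove that $F$ is $(0,1)$-locally immortal with respect to $T$ if and only if $T$ admits a valid tiling; the theorem follows by undecidability of the latter.

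One direction is immediate: a valid $T$-tiling is in particular a valid $A$-tiling all of whose columns, hence the central one, lie in $T=B$, so its associated configuration is a $(0,1)$-witness. The converse is the substance of the argument, and it is also where the naive construction breaks down. From a witness we obtain a valid $A$-tiling whose central column avoids $T^\complement$, and we must manufacture from it a tiling that uses $T$ only. The difficulty is that membership in $T^\complement$ is a purely local property of a tile (a tile lies in $T^\complement$ exactly when its northeast corner belongs to $X$), so completion tiles carry no automatic propagation. In fact a $2$-way deterministic tile set can possess a valid bi-infinite vertically matching $T$-column while failing to tile the plane — think of a single self-consistent ``rod'' tile that matches itself north--south but cannot be extended sideways within $T$. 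Hence the bare completion is too weak, and $T$ must first be preprocessed.

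The fix I would implement is to replace $T$, before completing, by a tiling-equivalent $2$-way deterministic tile set $T'$ engineered so that any occurrence of a completion tile forces a bi-infinite line of completion tiles along the northeast--southwest determinism directions, a line that necessarily meets column $0$. With such rigidity a defect-free central column forces a defect-free plane, so a witness yields an all-$T'$ tiling and the equivalence closes in both directions. Concretely I would overlay on $T$ a signalling layer that, consistently with the two determinism directions, transports the presence of a defect along its diagonal to every column. I expect the construction of this layer to be the main obstacle: the signal must be propagated while keeping the entire tile set $2$-way deterministic (so that the resulting CA stays reversible) and without altering tileability, and reversibility forbids the naive ``once marked, stay marked'' rule, forcing a reversible encoding of the defect signal. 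Granting this layer, everything else reduces to routine bookkeeping through the tiling--CA correspondence already set up in the excerpt.
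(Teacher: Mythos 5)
Your reduction skeleton is the correct one --- it is exactly the skeleton of Lukkarila's proof in \cite{Luk10}, which this paper does not reproduce but adapts in the proof of Lemma~\ref{local} --- and your diagnosis of why the naive choice $B=T$ fails is also correct. However, the proposal has a genuine gap, and it is not a peripheral one: the ``signalling layer'' that you introduce with the words ``Granting this layer'' \emph{is} the theorem. Everything you actually carry out (collapsing the $(0,1)$-condition to the column $\{0\}\times\Z$, the easy direction, the contrapositive reformulation of the hard direction) is routine bookkeeping; the construction of a defect-propagation mechanism that is compatible with $2$-way determinism, preserves tileability, and is reversible is precisely the technical content that the citation to \cite[Theorem 5.1.5]{Luk10} stands for, so deferring it leaves the statement unproved.

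Moreover, the specific mechanism you sketch --- preprocess $T$ so that, after completion, every completion tile forces a \emph{bi-infinite} NE--SW line of marked tiles --- cannot work as stated. After completion the marked tile set must itself be complete, so the markings carried by completion tiles must include one realizing the pair (unmarked, unmarked) on the north and east edges, and one realizing it on the south and west edges. The first is a source and the second is a sink: forced marked paths may therefore \emph{begin and end} at completion tiles, so they need not be bi-infinite, and a completion tile lying west of column $0$ whose marks emanate only toward the southwest is invisible to the immortal column. This is exactly why the actual proof does not operate at the tile level alone. In the paper's adaptation (proof of Lemma~\ref{local}), the arrow markings of Figure~\ref{arrows} are supplemented on the CA side by a digit layer $A_2=\{0,1,2\}$ and involutions $J_1$, $J_2$, $H$ composed with the tiling automaton, so that the presence of an arrow tile reversibly injects a digit which then travels strictly to the left at speed $1$ in the space-time diagram --- this is the ``reversible encoding of the defect signal'' you correctly predict is needed but do not supply --- and a minimal-distance argument (the quantities $d_1,d_2$ there) isolates one digit whose leftward journey is provably undisturbed by other arrows until it reaches the immortal region, yielding the contradiction. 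A complete proof must construct that mechanism; it cannot be granted.
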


We now adapt the proof of Theorem \ref{01immortal} to get the following result, which we will use in the proof of Theorem~\ref{TheoremLyapSofic}.

\begin{lemma}\label{local}The $(1,5)$-local immortality problem is undecidable for reversible radius-$\frac{1}{2}$ CA.\end{lemma}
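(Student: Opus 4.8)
The plan is to reduce the $(1,5)$-local immortality problem for reversible radius-$\frac{1}{2}$ CA to the $(0,1)$-local immortality problem of \rthe{01immortal}, or more directly to the tiling problem of \rthe{tiling}. The starting point is the correspondence established just before the statement: a complete $2$-way deterministic tile set $A$ yields a reversible radius-$0$ CA $F:A^\Z\to A^\Z$ whose space-time diagram reproduces a valid tiling read along northwest-to-southeast diagonals. The first step is to take an arbitrary $2$-way deterministic tile set $T$, complete it to $A=T\cup T^\complement$ as described, and form the associated CA $F$. Tiles of $T^\complement$ are ``garbage'' tiles introduced only to make the local rule total, so I would designate $B\subseteq A$ to consist precisely of the genuine tiles $T$ (the ones we want to appear). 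A valid tiling using only tiles of $T$ should then correspond to a locally immortal witness that stays inside $B$ along the prescribed space-time track, and conversely.

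The main technical issue is matching the geometry: the radius-$0$ CA from a tile set has anticipation $1$ (it reads cells $i$ and $i+1$), so it is already essentially radius-$\frac12$, but the $(1,5)$ parameters demand that the witness be controlled at cells $ip$ for $p=1$ at times $iq+j$ for $q=5$, $0\le j\le 5$. The natural approach is to encode the diagonal track of the tiling into a space-time track with horizontal step $1$ and vertical step $5$; this is a change of ``slope'' in the space-time diagram. Concretely, I would build a new alphabet and local rule that bundles together several tiling steps, or equivalently conjugate $F$ by a shift and pass to a power $F^5$ so that the relevant constraint ``$F^{iq+j}(x)[ip]\in B$'' traces out exactly the diagonal of the underlying valid tiling. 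The condition $0\le j\le q$ with $q=5$ gives a window of six consecutive time steps per spatial unit, which must be chosen to capture one full period of the encoded dynamics so that membership in $B$ at these sampled points is equivalent to validity of the tiling everywhere along the diagonal.

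Carrying this out, I would: (1) describe the radius-$\frac12$ CA obtained from $A$, possibly after a small rescaling of the alphabet so that the space-time slope aligns with the $(1,5)$ sampling lattice; (2) define $B$ as the image of the ``good'' tiles $T$ under this encoding; (3) show that if $T$ admits a valid tiling $\eta$, then the configuration $x$ read off from the appropriate diagonal of $\eta$ is a $(1,5)$-witness, because at each sampled space-time point the cell lies on a $T$-tile; and (4) conversely show that a $(1,5)$-witness forces, by reversibility and the determinism of the tiling rule in both directions, an entire valid tiling over $T$ filling the plane, so $T$ admits a valid tiling. Step (4) is where reversibility is essential: the SW-determinism lets one reconstruct the tiling below and to the left from the sampled track, and NE-determinism does the same above and to the right, propagating the constraint $x\in B$ outward to a globally valid $T$-tiling.

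The step I expect to be the main obstacle is the precise geometric encoding in (1)--(2): getting the window length $q=5$ and the spatial period $p=1$ to match the slope of the tiling diagonals so that ``lands on a $T$-tile at all sampled points'' is \emph{exactly} equivalent to ``the diagonal is a valid $T$-tiling.'' If the raw diagonal CA does not already have the right slope, I would need to pad the local rule with a bounded number of auxiliary states (a ``counter'' cycling through the $5$ time steps, or a widened neighborhood folded back into radius $\frac12$) and verify carefully that reversibility is preserved and that no spurious witness using $T^\complement$ tiles can satisfy the sampled constraints. Once the equivalence between $(1,5)$-witnesses and valid $T$-tilings is pinned down, undecidability of the $(1,5)$-local immortality problem follows immediately from \rthe{tiling}.
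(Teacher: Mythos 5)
Your reduction has a genuine gap, and it is exactly at step (4), the direction you dismiss with ``reversibility and 2-way determinism propagate the constraint outward.'' That claim is false. For the completed tile set $A=T\cup T^\complement$, NE- and SW-determinism let you reconstruct the \emph{tiles} of the entire space-time diagram from any single configuration, but nothing forces the reconstructed tiles to lie in $T$: the full shift $A^\Z$ consists entirely of legal initial configurations, so valid $A$-tilings exist regardless of whether $T$ tiles the plane, and the $(1,5)$-witness condition constrains only a thin staircase track of cells (six cells per column) to carry tiles of $B$. A witness therefore gives you a valid tiling over $A$ whose sampled track happens to avoid $T^\complement$, and there is no mechanism in your construction by which this forces the cells \emph{off} the track to avoid $T^\complement$ as well. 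Compactness is of no help here either, since the track contains no large squares. Consequently you cannot conclude from a witness that $T$ admits a valid tiling, and the reduction breaks. (Your fallback idea of passing to $F^5$ or conjugating by a shift also destroys the radius-$\frac12$ requirement: powers of a one-sided anticipation-$1$ CA have anticipation $5$, not $1$.)

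This gap is precisely why the paper's proof is not the naive reduction. There, the tile set is first augmented to $A_1=(T\times T_1)\cup(T^\complement\times T_2)$, where $T_1,T_2$ are auxiliary \emph{arrow} tiles, and the CA is further extended by a digit layer $A_2=\{0,1,2\}$ acted on by involutions $J_1,J_2,H$ interleaved with the tile-CA $G$; the set $B=(T\times\{t_b\})\times\{0\}$ demands blank arrow and digit $0$ on the track. The whole point of this machinery is to repair your step (4): if $T$ does not tile the plane, compactness forces an arrow tile to appear somewhere in the region to the upper right of the track, and the arrows together with the digit dynamics generate signals ($1$'s moving to the lower left, $2$'s to the upper left) that provably reach the track $N$ and violate the condition $B$ there, yielding the contradiction. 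Your ``main obstacle,'' the slope/geometry matching in steps (1)--(2), is by contrast a non-issue: the tile-set CA already has memory $0$ and anticipation $1$, and the parameters $(1,5)$ are chosen not to match the diagonal slope but to give these error signals room to hit the track. Without an error-propagation mechanism of this kind, the equivalence you need in step (4) cannot be established.
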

\begin{proof}We will reduce the problem of Theorem~\ref{tiling} to the $(1,5)$-local immortality problem. Let $T$ be a 2-way deterministic tile set and construct a complete tile set $T\cup T^\complement$ as indicated above. Then also $A_1=(T\times T_1)\cup(T^\complement\times T_2)$ ($T_1$ and $T_2$ as in \rfig{arrows}) is a complete tile set.\footnote{The arrow markings are used as a shorthand for some coloring such that the heads and tails of the arrows in neighboring tiles match in a valid tiling.} We denote the blank tile of the set $T_1$ by $t_b$ and call the elements of $R=A_1\setminus(T\times\{t_b\})$ arrow tiles. As indicated above, the tile set $A_1$ determines a reversible radius-$\frac{1}{2}$ CA $G_1:A_1^\Z\to A_1^\Z$.

\begin{figure}
\centering
\begin{tikzpicture}
\def\rectanglepath{-- ++(1.5cm,0cm)-- ++(0cm,1.5cm)-- ++(-1.5cm,0cm) -- cycle}
\draw (0,0) \rectanglepath;
\draw (2,0) \rectanglepath; \draw [->] (3.5cm,0.75cm) -- (2.05cm,0.75cm);
\draw (4,0) \rectanglepath; \draw [->] (4.75cm,1.5cm) -- (4.75cm,0.05cm);
\draw (6,0) \rectanglepath; \draw [->] (7.5cm,0.75cm) -- (6.05cm,0.75cm); \draw [->] (6.75cm,1.5cm) -- (6.75cm,0.05cm);

\draw (0,-2) \rectanglepath; \draw [->] (0.75,-1.25) -- (0.05,-1.25);
\draw (2,-2) \rectanglepath; \draw [->] (3.5,-1.25) -- (2.05,-1.25); \draw [->] (2.75,-1.25) -- (2.75,-1.95);
\draw (4,-2) \rectanglepath; \draw [->] (4.75cm,-0.5cm) -- (4.75cm,-1.95cm);
\draw (6,-2) \rectanglepath; \draw [->] (7.5cm,-1.25cm) -- (6.80cm,-1.25cm); \draw [->] (6.75cm,-0.5cm) -- (6.75cm,-1.20cm);
\end{tikzpicture}
\caption{The tile sets $T_1$ (first row) and $T_2$ (second row). These are originally from \cite{Luk10} (up to a reflection with respect to the northwest - southeast diagonal).}
\label{arrows}
\end{figure}
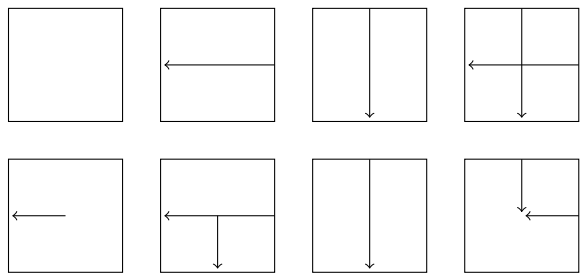

Let $A_2=\{0,1,2\}$. Define $A=A_1\times A_2$ and natural projections $\pi_i:A\to A_i$, $\pi_i(a_1,a_2)=a_i$ for $i\in\{1,2\}$. By extension we say that $a\in A$ is an arrow tile if $\pi_1(a)\in R$. Let $G:A^\Z\to A^\Z$ be defined by $G(c,e)=(G_1(c),e)$ where $c\in A_1^\Z$ and $e\in A_2^\Z$, i.e. $G$ simulates $G_1$ in the upper layer. We construct involutive CA $J_1$, $J_2$ and $H$ of memory $0$ with local rules $j_1:A_2\to A_2$, $j_2:A_2^2\to A_2$ and $h:(A_1\times A_2)\to (A_1\times A_2)$ respectively defined by
\begin{flalign*}
&\begin{array}{c}
j_1(0)=0 \\
j_1(1)=2 \\
j_1(2)=1
\end{array}
\qquad
j_2(a,b)=
\left\{
\begin{array}{l}
1 \mbox{ when } (a,b)=(0,2) \\
0 \mbox{ when } (a,b)=(1,2) \\
a \mbox{ otherwise }
\end{array}
\right. \\
&h((a,b))=
\left\{
\begin{array}{l}
(a,1) \mbox{ when } a\in R \mbox{ and } b=0 \\
(a,0) \mbox{ when } a\in R \mbox{ and } b=1 \\
(a,b) \mbox{ otherwise.}
\end{array}
\right.
\end{flalign*}

If $\id:A_1^\Z\to A_1^\Z$ is the identity map, then $J=(\id\times J_2)\circ (\id\times J_1)$ is a CA on $A^\Z=(A_1\times A_2)^\Z$. We define the radius-$\frac{1}{2}$ automaton $F=H\circ J\circ G:A^\Z\to A^\Z$ and select $B=(T\times\{t_b\})\times\{0\}$. We will show that $T$ admits a valid tiling if and only if $F$ is $(1,5)$-locally immortal with respect to $B$.

Assume first that $T$ admits a valid tiling $\eta$. Then by choosing $x\in A^\Z$ such that $x[i]=((\eta(i,-i),t_b),0)\in A_1\times A_2$ for $i\in\Z$ it follows that $F^j(x)[i]\in B$ for all $i,j\in\Z$ and in particular that $x$ is a $(1,5)$-witness. 

Assume then that $T$ does not admit any valid tiling and for a contradiction assume that $x$ is a $(1,5)$-witness. Let $\theta$ be the space-time diagram of $x$ with respect to $F$. Since $x$ is a $(1,5)$-witness, it follows that $\theta(i,-j)\in B$ whenever $(i,-j)\in N$, where $N=\{(i,-j)\in \Z^2\mid 5i\leq j\leq 5(i+1)\}$. There is a valid tiling $\eta$ over $A_1$ such that $\pi_1(\theta(i,j))=\eta(i,j-i)$ for $(i,j)\in\Z^2$, i.e. $\eta$ can be recovered from the upper layer of $\theta$ by applying a suitable linear transformation on the space-time diagram. In drawing pictorial representations of $\theta$ we want that the heads and tails of all arrows remain properly matched in neighboring coordinates, so we will use tiles with ``bent'' labelings, see \rfig{bentarrows}. Since $T$ does not admit valid tilings, it follows by a compactness argument that $\eta(i,j)\notin T\times T_1$ for some $(i,j)\in D$ where $D=\{(i,j)\in\Z^2\mid j>-6i \}$ and in particular that $\eta(i,j)$ is an arrow tile. Since $\theta$ contains a ``bent'' version of $\eta$, it follows that $\theta(i,j)$ is an arrow tile for some $(i,j)\in E$, where $E=\{(i,j)\in\Z^2\mid j>-5i\}$ is a ``bent'' version of the set $D$. In \rfig{stdArr} we present the space-time diagram $\theta$ with arrow markings of tiles from $T_1$ and $T_2$ replaced according to the \rfig{bentarrows}. In \rfig{stdArr} we have also marked the sets $N$ and $E$. Other features of the figure become relevant in the next paragraph.

\begin{figure}[ht]
\centering
\begin{tikzpicture}
\def\rectanglepath{-- ++(1.5cm,0cm)-- ++(0cm,1.5cm)-- ++(-1.5cm,0cm) -- cycle}
\draw (0,0) \rectanglepath;
\draw (2,0) \rectanglepath; \draw [->] (3.5cm,1.5cm) -- (2.05cm,0.05cm);
\draw (4,0) \rectanglepath; \draw [->] (4.75cm,1.45cm) -- (4.75cm,0cm);
\draw (6,0) \rectanglepath; \draw [->] (7.5cm,1.5cm) -- (6.05cm,0.05cm); \draw [->] (6.75cm,1.45cm) -- (6.75cm,0cm);

\draw (0,-2) \rectanglepath; \draw [->] (0.75,-1.25) -- (0.05,-1.95);
\draw (2,-2) \rectanglepath; \draw [->] (3.5,-0.5) -- (2.05,-1.95); \draw [->] (2.75,-1.25) -- (2.75,-1.95);
\draw (4,-2) \rectanglepath; \draw [->] (4.75cm,-0.5cm) -- (4.75cm,-1.95cm);
\draw (6,-2) \rectanglepath; \draw [->] (7.5cm,-0.5cm) -- (6.85cm,-1.20cm); \draw [->] (6.75cm,-0.5cm) -- (6.75cm,-1.20cm);
\end{tikzpicture}
\caption{The tile sets $T_1$ and $T_2$ presented in a ``bent'' form.}
\label{bentarrows}

\vspace{3.00mm}

\begin{tikzpicture}[scale=0.6]

\begin{scope}[yscale=-1,xscale=1]
\def\band{-- ++(0,5)-- ++(1,0) ++(0,-6) -- ++(0,5)-- ++(1,0)}
\def\rectanglepath{-- ++(1cm,0cm)-- ++(0cm,1cm)-- ++(-1cm,0cm) -- cycle}
\draw (0,1) \band; \draw (1,6) \band;
\draw [->] (2.5,0) -- (2.5,3);
\draw [->] (2.5,3) -- (2.5,4.4);
\draw [->] (6.5,0.5) -- (2.6,4.4);
\draw [->] (3.5,3.5) -- (3.5,6.4);
\draw [->] (7.5,2.5) -- (3.6,6.4);
\draw (5.5,1.5) -- (5.5,11);

\draw [decorate,decoration=brace] (0.5,1.5) -- (2.5,1.5) node[midway,yshift=10pt]{$d_1$};
\draw [decorate,decoration=brace] (2.5,4.5) -- (2.5,10.5) node[midway,xshift=10pt]{$d_2$};
\draw[thick] (2,2) \rectanglepath;
\draw[thick] (2,4) \rectanglepath;

\node at (1,5.5) {$N$};
\node at (4.5,7.5) {$E$};
\node at (3.9,2.5) {$\theta(p,q)$};
\node at (4.4,4.5) {$\theta(p,q-2)$};
\end{scope}
\end{tikzpicture}

\caption{The space-time diagram $\theta$ with ``bent'' arrow markings. An arrow tile $\theta(p,q-2)$ in $E$ with minimal horizontal and vertical distances to $N$ has been highlighted.}
\label{stdArr}
\end{figure}

The minimal distance between a tile in $N$ and an arrow tile in $E$ situated on the same horizontal line in $\theta$ is denoted by $d_1>0$. Then, among those arrow tiles in $E$ at horizontal distance $d_1$ from $N$, there is a tile with minimal vertical distance $d_2>0$ from $N$ (see \rfig{stdArr}). Fix $p,q\in\Z$ so that $\theta(p,q-2)$ is one such tile and in particular $(p-d_1,q-2),(p,q-2-d_2)\in N$. Then $\theta(p,q-j)$ contains an arrow for $-2\leq j \leq 2$, because if there is a $j\in[-2,2)$ such that $\theta(p,q-j)$ does not contain an arrow and $\theta(p,q-j-1)$ does, then $\theta(p,q-j-1)$ must contain one of the three arrows on the left half of \rfig{bentarrows}. These three arrows continue to the southwest, so then also $\theta(p-1,q-j-2)$ contains an arrow. Because $\theta(p',q')\in B$ for $(p',q')\in N$, it follows that $(p-1,q-j-2)\notin N$ and thus $(p-1,q-j-2)\in E$. Since $(p-d_1,q-2)\in N$, it follows that one of the $(p-d_1-1,q-j-2)$, $(p-d_1,q-j-2)$ and $(p-d_1+1,q-j-2)$ belong to $N$. Thus the horizontal distance of the tile $\theta(p-1,q-j-2)$ from the set $N$ is at most $d_1$, and is actually equal to $d_1$ by the minimality of $d_1$. Since $N$ is invariant under translation by the vector $-(1,-5)$, then from $(p,q-2-d_2)\in N$ it follows that $(p-1,q+3-d_2)\in N$ and that the vertical distance of the tile $\theta(p-1,q-j-2)$ from $N$ is at most $(q-j-2)-(q+3-d_2)\leq d_2-3$, contradicting the minimality of $d_2$. Similarly, $\theta(p-i,q-j)$ does not contain an arrow for $0<i\leq d_1,$ $-2\leq j\leq 2$ by the minimality of $d_1$ and $d_2$.

Now consider the $A_2$-layer of $\theta$. For the rest of the proof let $y=F^{-q}(x)$. Assume that $\pi_2(\theta(p-i,q))=\pi_2(y[p-i])$ is non-zero for some $i\geq 0$, $(p-i,q)\in E$, and fix the greatest such $i$, i.e. $\pi_2(y[s])=0$ for $s$ in the set
\[I_0=\{p'\in\Z\mid p'<p-i, (p',q)\in N\cup E\}.\]
We start by considering the case $\pi_2(y[p-i])=1$. Denote 
\[I_1=\{p'\in\Z\mid p'<p-i, (p',q-1)\in N\cup E\}\subseteq I_0.\]
From the choice of $(p,q)$ it follows that $\pi_1(\theta(s,q-1))=\pi_1(G(y)[s])$ are not arrow tiles for $s\in I_1$, and therefore we can compute step by step that
\begin{flalign*}
&\pi_2((\id\times J_1)(G(y))[p-i])=2, \quad\pi_2((\id\times J_1)(G(y))[s])=0\mbox{ for }s\in I_0\subseteq I_1, \\
&\pi_2(J(G(y))[p-(i+1)])=1, \quad\pi_2(J(G(y))[s])=0\mbox{ for }s\in I_1\setminus\{p-(i+1)\}, \\
&\pi_2(F(y))[p-(i+1)])=1, \quad\pi_2(F(y)[s])=0\mbox{ for }s\in I_1\setminus\{p-(i+1)\}
\end{flalign*}
and $\pi_2(\theta(p-(i+1),q-1))=1$. By repeating this argument inductively we see that the digit $1$ propagates to the lower left in the space-time diagram as indicated by \rfig{left} and eventually reaches $N$, a contradiction. If on the other hand $\pi_2(\theta(p-i,q))=2$, a similar argument shows that the digit $2$ propagates to the upper left in the space-time diagram as indicated by \rfig{left} and eventually reaches $N$, also a contradiction.

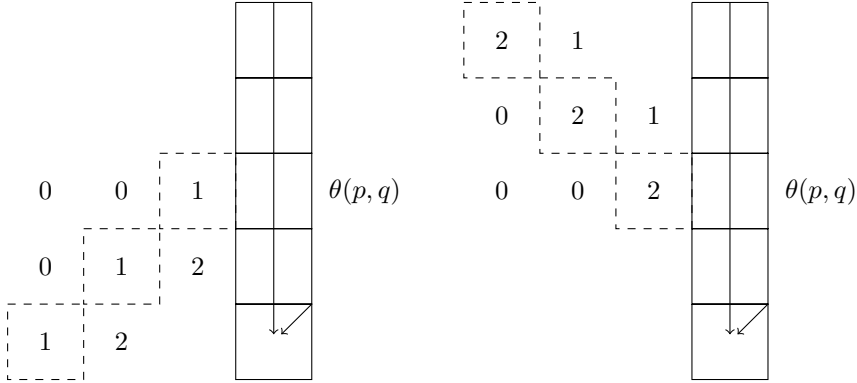
\begin{figure}
\centering
\begin{tikzpicture}
\begin{scope}[yscale=-1,xscale=1]
\def\rectanglepath{-- ++(1cm,0cm)-- ++(0cm,1cm)-- ++(-1cm,0cm) -- ++(0cm,-1cm)}
\def\fivesquares{\rectanglepath ++(0,1) \rectanglepath ++(0,1) \rectanglepath ++(0,1) \rectanglepath ++(0,1) \rectanglepath}
\def\fivelines{++(0.5,0) -- ++(0,1) -- ++(0,1) -- ++(0,1) -- ++(0,1) -- ++(0,0.4)}

\draw (3,0) \fivesquares; \draw [->] (3,0) \fivelines; \draw [->] (4,4) -- (3.6,4.4);
\draw[dashed] (2,2) \rectanglepath; \draw[dashed] (1,3) \rectanglepath; \draw[dashed] (0,4) \rectanglepath;
\node at (0.5,4.5) {$1$}; \node at (1.5,4.5) {$2$}; 
\node at (0.5,3.5) {$0$}; \node at (1.5,3.5) {$1$}; \node at (2.5,3.5) {$2$}; 
\node at (0.5,2.5) {$0$}; \node at (1.5,2.5) {$0$}; \node at (2.5,2.5) {$1$};
\node at (4.7,2.5) {$\theta(p,q)$};

\draw (9,0) \fivesquares; \draw [->] (9,0) \fivelines; \draw [->] (10,4) -- (9.6,4.4);
\draw[dashed] (8,2) \rectanglepath; \draw[dashed] (7,1) \rectanglepath; \draw[dashed] (6,0) \rectanglepath;
\node at (6.5,2.5) {$0$}; \node at (7.5,2.5) {$0$}; \node at (8.5,2.5) {$2$};
\node at (6.5,1.5) {$0$}; \node at (7.5,1.5) {$2$}; \node at (8.5,1.5) {$1$};
\node at (6.5,0.5) {$2$}; \node at (7.5,0.5) {$1$};
\node at (10.7,2.5) {$\theta(p,q)$};
\end{scope}
\end{tikzpicture}
\caption{Propagation of digits to the left of $\theta(p,q)$.}
\label{left}
\end{figure}

Assume then that $\pi_2(\theta(p-i,q))$ is zero whenever $i\geq 0$, $(p-i,q)\in E$. If $\pi_2(\theta(p+1,q))=\pi_2(y[p+1])\neq 1$, then $\pi_2((\id\times J_1)(G(y))[p+1])\neq 2$ and $\pi_2(J(G(y))[p])=0$. Since $\pi_1(\theta(p,q-1))$ is an arrow tile, it follows that $\pi_2(\theta(p,q-1))=\pi_2(H(J(G(y)))[p])=1$. The argument of the previous paragraph shows that the digit $1$ propagates to the lower left in the space-time diagram as indicated by the left side of \rfig{right} and eventually reaches $N$, a contradiction. 

Finally consider the case $\pi_2(\theta(p+1,q))=\pi_2(y[p+1])=1$. Then
\begin{flalign*}
&\pi_2(J(G(y))[p])\pi_2(J(G(y))[p+1])=12 \mbox{ and } \\
&\pi_2(F(y)[p])\pi_2(F(y)[p+1])=02.
\end{flalign*}
As in the previous paragraph we see that $\pi_2(\theta(p,q-2))=1$. This occurrence of the digit $1$ propagates to the lower left in the space-time diagram as indicated by the right side of \rfig{right} and eventually reaches $N$, a contradiction.

\begin{figure}
\centering
\begin{tikzpicture}
\begin{scope}[yscale=-1,xscale=1]
\def\rectanglepath{-- ++(1cm,0cm)-- ++(0cm,1cm)-- ++(-1cm,0cm) -- ++(0cm,-1cm)}
\def\threesquares{\rectanglepath ++(0,1) \rectanglepath ++(0,1) \rectanglepath}
\def\threelines{++(0.5,0) -- ++(0,1) -- ++(0,1) -- ++(0,0.5)}

\draw (3,1) \threesquares; \draw [->] (3,1) \threelines; \draw [->] (4,3) -- (3.6,3.4);
\draw[dashed] (2,3) \rectanglepath; \draw[dashed] (1,4) \rectanglepath;
\node at (1.5,4.5) {$1$}; \node at (2.5,4.5) {$2$};
\node at (1.5,3.5) {$0$}; \node at (2.5,3.5) {$1$}; \node[fill=white] at (3.5,3.5) {$2$};
\node at (1.5,2.5) {$0$}; \node at (2.5,2.5) {$0$}; \node[fill=white] at (3.5,2.5) {$1$};
\node at (1.5,1.5) {$0$}; \node at (2.5,1.5) {$0$}; \node[fill=white] at (3.5,1.5) {$0$}; \node at (4.5,1.5) {$\not 1$};
\node at (3.5,0.5) {$\theta(p,q)$};

\draw (9,1) \threesquares; \draw [->] (9,1) \threelines; \draw [->] (10,3) -- (9.6,3.4);
\draw[dashed] (8,4) \rectanglepath;
\node at (7.5,4.5) {$0$}; \node at (8.5,4.5) {$1$}; \node at (9.5,4.5) {$2$};
\node at (7.5,3.5) {$0$}; \node at (8.5,3.5) {$0$}; \node[fill=white] at (9.5,3.5) {$1$};
\node at (7.5,2.5) {$0$}; \node at (8.5,2.5) {$0$}; \node[fill=white] at (9.5,2.5) {$0$}; \node at (10.5,2.5) {$2$};
\node at (7.5,1.5) {$0$}; \node at (8.5,1.5) {$0$}; \node[fill=white] at (9.5,1.5) {$0$}; \node at (10.5,1.5) {$1$};
\node at (9.5,0.5) {$\theta(p,q)$};
\end{scope}
\end{tikzpicture}
\caption{Propagation of digits at $\theta(p,q)$.}
\label{right}
\end{figure}
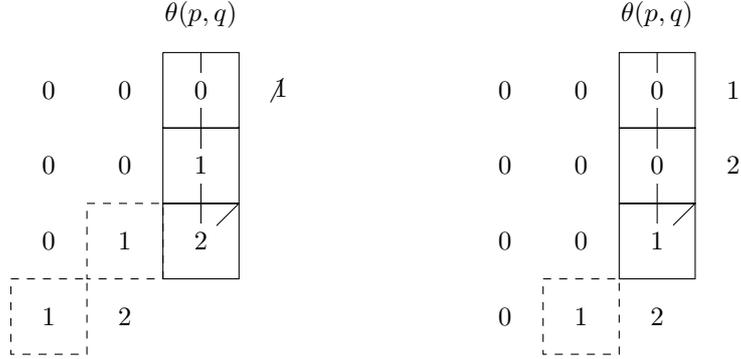

\end{proof}

\begin{remark}
It is possible that the $(p,q)$-local immortality problem is undecidable for reversible radius-$\frac{1}{2}$ CA whenever $p\in\N$ and $q\in\Npos$. We proved this in the case $(p,q)=(1,5)$ but for our purposes it is sufficient to prove this just for some $p>0$ and $q>0$. The important (seemingly paradoxical) part will be that for $(1,5)$-locally immortal radius-$\frac{1}{2}$ CA $F$ the ``local immortality'' travels to the right in the space-time diagram even though in reality there cannot be any information flow to the right because $F$ is one-sided. 
\end{remark}

\section{Uncomputability of Lyapunov exponents}

In this section we will prove our main result saying that there is no algorithm that can compute the Lyapunov exponents of a given reversible cellular automaton on a full shift to an arbitrary precision.

To achieve greater clarity we first prove this result in a more general class of subshifts. For the statement of the following theorem, we recall for completeness that a sofic shift $X\subseteq A^\Z$ is a subshift that can be represented as the set of labels of all bi-infinite paths on some labeled directed graph. This precise definition will not be of any particular importance, because the sofic shifts that we construct are of very specific form. We will appeal to the proof of the following theorem during the course of the proof of our main result. 

\begin{theorem}\label{TheoremLyapSofic}For reversible CA $F:X\to X$ on sofic shifts such that $\lambda^{+}(F)\in [0,\frac{5}{3}]\cup\{2\}$ it is undecidable whether $\lambda^{+}(F)\leq \frac{5}{3}$ or $\lambda^{+}(F)=2$.\end{theorem}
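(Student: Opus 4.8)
The plan is to reduce the $(1,5)$-local immortality problem, shown undecidable in \rlem{local}, to the problem of deciding whether $\lambda^+(F)\le\frac53$ or $\lambda^+(F)=2$. Concretely, given a reversible radius-$\frac12$ CA $F\colon A^\Z\to A^\Z$ and a subset $B\subseteq A$ as in \rlem{local}, I would build a new reversible CA $\tilde F$ on a sofic shift $X$ whose right Lyapunov exponent equals $2$ exactly when $F$ is $(1,5)$-locally immortal with respect to $B$, and is at most $\frac53$ otherwise. Because the construction guarantees that these are the only two possible values, and because local immortality is undecidable, the dichotomy in the statement is undecidable as well. The restriction to sofic (rather than full) shifts is what lets me impose, by forbidding finitely many patterns, the structural constraints that the auxiliary ``signal'' layer must obey.

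For the construction I would keep $F$ running on one layer and add one or more signal layers carrying ``defect'' particles whose propagation is what the Lyapunov exponent measures; since $F$ itself is one-sided, its own contribution to $\lambda^+$ is $0$, so all rightward information flow is carried by the signals. I would design the signal rule (made reversible by taking it involutive and composing as in the $H\circ J\circ G$ construction of \rlem{local}) so that a particle may advance to the right at the maximal admissible speed $2$ only while it rides a cell that is marked by the $(1,5)$-immortality pattern, i.e.\ a cell lying in the region $N$ of the space-time diagram used in \rlem{local}; in the absence of such a mark the particle is forced into a slower regime whose sustained rightward speed is capped at $\frac53$. The key point, highlighted in the remark following \rlem{local}, is that although $F$ admits no genuine rightward information flow, the locally immortal pattern \emph{appears} to move rightward in the space-time diagram, and this apparent motion is exactly the ``track'' along which a fast particle can be sustained.

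To verify the two cases I would argue as follows. If $F$ is $(1,5)$-locally immortal, pick a $(1,5)$-witness $x$; it provides an infinite marked track, so I can exhibit configurations of $X$ in which a defect placed at position $-s$ reaches position $0$ within $\lceil s/2\rceil$ steps, forcing $\overline{\Lambda}^+_n\ge 2n-o(n)$ and hence $\lambda^+(\tilde F)=2$ (the reverse inequality $\lambda^+\le 2$ being built into the speed bound of the signal rule). If $F$ is \emph{not} $(1,5)$-locally immortal, then by the same compactness argument used in \rlem{local} every configuration fails the immortality condition within a uniformly bounded horizon, so no fast particle can stay on a marked track indefinitely; a bookkeeping argument on how often a particle is thrown out of the speed-$2$ regime then shows $\max_x\Lambda^+_n(x)/n\le\frac53$ for all large $n$, giving $\lambda^+(\tilde F)\le\frac53$.

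I expect the main obstacle to be the gadget design itself: the signal rule must simultaneously be reversible, be expressible by forbidding finitely many patterns (so that $X$ is sofic), admit sustained speed $2$ precisely along an immortal track, and provably degrade to asymptotic speed at most $\frac53$ whenever the track is broken. The delicate quantitative part is the second case, where the uniform bound coming from non-immortality must be converted, via the $5$-step period of the immortality condition, into the exact ceiling $\frac53$ on the rightward speed while keeping the analysis compatible with the shift-invariant maximization defining $\lambda^+$.
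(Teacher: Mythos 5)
Your high-level architecture coincides with the paper's: reduce from Lemma~\ref{local}, run the given one-sided CA $G$ on a lower layer (so that it contributes nothing to $\lambda^+$), and let a particle on a sofic upper layer carry all rightward information flow, with speed-$2$ motion possible exactly along the ``apparently rightward-moving'' locally immortal track, and compactness supplying a uniform horizon $N$ with $C(2N)=\emptyset$ in the non-immortal case. The genuine gap is in that second case: you assert that off the track the particle enters ``a slower regime whose sustained rightward speed is capped at $\frac{5}{3}$'' and that ``a bookkeeping argument'' finishes, but you never specify the mechanism, and the mechanism is the entire difficulty. Any design in which the penalty for failing the immortality check is a slowdown of bounded duration (resuming fast motion as soon as the check can again be passed) cannot work: the particle may then alternate fast runs of length $N$ with $O(1)$-step slow interludes, achieving asymptotic speed roughly $\frac{2N}{N+O(1)}$. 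Since $N$ depends on the input CA and is unbounded over inputs, this exceeds $\frac{5}{3}$ for large $N$, so the promise $\lambda^{+}(F)\in[0,\frac{5}{3}]\cup\{2\}$ is violated and the reduction collapses. What is needed is a penalty that \emph{scales with the length of the preceding fast run}, so that the resulting bound is independent of $N$. Relatedly, your guess that $\frac{5}{3}$ comes from the $5$-step period of $(1,5)$-immortality is a symptom of the missing gadget: in the correct argument the constant arises as $1+\frac{2}{3}$ from an amortization, and has nothing to do with the period.

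The paper's gadget is a wall-dropping involution: $F_2$ swaps $\rfast 0\leftrightarrow\lslow\wall$ precisely when the local immortality check fails. Thus an interrupted fast particle leaves a wall where it stopped, retreats at speed $1$ to the nearest wall on its \emph{left}, bounces, and returns; crucially, the wall from which its current fast run was launched was \emph{erased} at launch (the inverse swap), so the retreat distance is at least the distance $2\ell$ just covered. Consequently every maximal $\rfast$-run $v_i$ is followed by at least $2(\abs{v_i}-1)+1$ occurrences of $\lslow$ and as many of $\rslow$, the number $k_n$ of fast steps satisfies $k_n\leq\abs{w_n}/3+\ord(1)$, and $\Lambda^+_n\leq\abs{w_n}+2k_n\leq\frac{5}{3}n+\ord(1)$ uniformly in $N$; the horizon $N$ is needed only to bound the final, uninterrupted run. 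Two smaller inaccuracies: the constraint ``at most one particle'' defining the sofic layer is not expressible by forbidding finitely many patterns (that would give an SFT; the paper's $Y$ is sofic but not of finite type), and reversibility forces the resumption of fast motion to be triggered by the \emph{same} failure condition as the interruption (the swap is an involution), whereas your ``fast only while marked'' phrasing suggests resumption when the mark is present, which is not an involutive rule.
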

\begin{proof}We will reduce the decision problem of \rlem{local} to the present problem. Let $G:A_2^\Z\to A_2^\Z$ be a given reversible radius-$\frac{1}{2}$ cellular automaton and $B\subseteq A_2$ some given set. Let $A_1=\{0,\wall,\lfast,\rfast,\lslow,\rslow\}$ and define a sofic shift $Y\subseteq A_1^\Z$ as the set of those configurations containing a symbol from $Q=\{\lfast,\rfast,\lslow,\rslow\}$ in at most one position. We will interpret elements of $Q$ as particles going in different directions at different speeds and which bounce between walls denoted by $\wall$. Let $S:Y\to Y$ be the reversible radius-$2$ CA which does not move occurrences of $\wall$ and which moves $\lfast$ (resp. $\rfast$, $\lslow$, $\rslow$) to the left at speed $2$ (resp. to the right at speed $2$, to the left at speed $1$, to the right at speed $1$) with the additional condition that when an arrow meets a wall, it changes into the arrow with the same speed and opposing direction. More precisely, $S$ is the CA with memory $2$ and anticipation $2$ determined by the local rule $f:A_1^5\to A_1$ defined as follows (where $*$ denotes arbitrary symbols):
\begin{flalign*}
\begin{array}{l l l}
f(\rfast,0,0,*,*)=\rfast	& \quad & f(*,\rslow,0,*,*)=\rslow \\
f(*,\rfast,0,\wall,*)=\lfast & \quad & f(*,*,\rslow,0,*) = 0, \\
f(*,*,\rfast,0,*) = 0 & \quad & f(*,*,\rslow,\wall,*) =\lslow,\\
f(*,0,\rfast,\wall,*)=0 & & \\
f(*,\wall,\rfast,\wall,*)=\rfast & & \\
f(*,*,0,\rfast,\wall)=\lfast & &
\end{array}
\end{flalign*}
with symmetric definitions for arrows in the opposite directions at reflected positions and $f(*,*,a,*,*)=a$ ($a\in A_1$) otherwise. Then let $X=Y\times A_2^\Z$ and $\pi_1:X\to Y$, $\pi_2:X\to A_2^\Z$ be the natural projections $\pi_i(x_1,x_2)=x_i$ for $x_1\in Y,x_2\in A_2^\Z$ and $i\in\{1,2\}$.

Let $x_1\in Y$ and $x_2\in A_2^\Z$ be arbitrary. We define reversible CA $G_2,F_1:X\to X$ by $G_2(x_1,x_2)=(x_1,G^{10}(x_2))$, $F_1(x_1,x_2)=(S(x_1),x_2)$. Additionally, let $F_2:X\to X$ be the involution which maps $(x_1,x_2)$ as follows: $F_2$ replaces an occurrence of $\rfast 0\in A_1^2$ in $x_1$ at a coordinate $i\in\Z$ by an occurrence of $\lslow\wall\in A_1^2$ (and vice versa) \emph{if and only if}
\begin{flalign*}
& G^j(x_2)[i]\notin B\mbox{ for some }0\leq j\leq 5 \\
\mbox{or } &G^j(x_2)[i+1]\notin B \mbox{ for some } 5\leq j\leq 10,
\end{flalign*}
and otherwise $F_2$ makes no changes. Finally, define $F=F_1\circ G_2\circ F_2:X\to X$. The reversible CA $F$ works as follows. Typically particles from $Q$ move in the upper layer in the intuitive manner indicated by the map $S$ and the lower layer is transformed according to the map $G^{10}$. There are some exceptions to the usual particle movements: If there is a particle $\rfast$ which does not have a wall immediately at the front and $x_2$ does not satisfy a local immortality condition in the next $10$ time steps, then $\rfast$ changes into $\lslow$ and at the same time leaves behind a wall segment $\wall$. Conversely, if there is a particle $\lslow$ to the left of the wall $\wall$ and $x_2$ does not satisfy a local immortality condition, $\lslow$ changes into $\rfast$ and removes the wall segment.

We will show that $\lambda^{+}(F)=2$ if $G$ is $(1,5)$-locally immortal with respect to $B$ and $\lambda^{+}(F)\leq \frac{5}{3}$ otherwise. Intuitively the reason for this is that if $x,y\in X$ are two configurations that differ only to the left of the origin, then the difference between $F^i(x)$ and $F^i(y)$ can propagate to the right at speed $2$ only via an arrow $\rfast$ that travels on top of a $(1,5)$-witness. Otherwise, a signal that attempts to travel to the right at speed $2$ is interrupted at bounded time intervals and forced to return at a slower speed beyond the origin before being able to continue its journey to the right. We will give more details.

Assume first that $G$ is $(1,5)$-locally immortal with respect to $B$. Let $x_2\in A_2^\Z$ be a $(1,5)$-witness and define $x_1\in Y$ by $x_1[0]=\rfast$ and $x_1[i]=0$ for $i\neq 0$. Let $x=(0^\Z,x_2)\in X$ and $y=(x_1,x_2)\in X$. It follows that $\pi_1(F^i(x))[2i]=0$ and $\pi_1(F^i(y))[2i]=\rfast$ for every $i\in\N$, so $\lambda^{+}(F)\geq 2$. On the other hand, $F$ has memory $2$ so necessarily $\lambda^{+}(F)=2$.

Assume then that there are no $(1,5)$-witnesses for $G$. Let us denote
\[C(n)=\{x\in A_2^\Z\mid G^{5i+j}(x)[i]\in B\mbox{ for } 0\leq i\leq n,0\leq j\leq 5\}\mbox{ for } n\in\N.\]
Since there are no $(1,5)$-witnesses, by a compactness argument we may fix some $N\in\Npos$ such that $C(2N)=\emptyset$. We claim that $\lambda^{+}(F)\leq \frac{5}{3}$, so let us assume that $(x^{(n)})_{n\in\N}$ with $x^{(n)}=(x_1^{(n)},x_2^{(n)})\in X$ is a sequence of configurations such that $\Lambda_n^{+}(x^{(n)},F)=s_n n$ where $(s_n)_{n\in\N}$ tends to $\lambda^{+}$. There exist $y^{(n)}=(y_1^{(n)},y_2^{(n)})\in X$ such that $x^{(n)}[i]=y^{(n)}[i]$ for $i>-s_n n$ and $F^{t_n}(x)[i_n]\neq F^{t_n}(y)[i_n]$ for some $0\leq t_n\leq n$ and $i_n\geq 0$.

First assume that there are arbitrarily large $n\in\N$ for which $x_1^{(n)}[i]\in \{0,\wall\}$ for $i>-s_n n$ and consider the subsequence of such configurations $x^{(n)}$ (starting with sufficiently large $n$). Since $G$ is a one-sided CA, it follows that $\pi_2(F^{t_n}(x^{(n)}))[j]=\pi_2(F^{t_n}(y^{(n)}))[j]$ for $j\geq 0$. Therefore the difference between $x^{(n)}$ and $y^{(n)}$ can propagate to the right only via an arrow from $Q$, so without loss of generality (by swapping $x^{(n)}$ and $y^{(n)}$ if necessary) $\pi_1(F^{t_n}(x^{(n)}))[j_n]\in Q$ for some $0\leq t_n\leq n$ and $j_n\geq i_n-1$. Fix some such $t_n,j_n$ and let $w_n\in Q^{t_n+1}$ be such that $w_n(i)$ is the unique state from $Q$ in the configuration $F^i(x^{(n)})$ for $0\leq i\leq t_n$. The word $w_n$ has a factorization of the form $w_n=u(v_1 u_1\cdots v_k u_k)v$ ($k\in\N$) where $v_i\in\{\rfast\}^+$, $v\in\{\rfast\}^*$ and $u_i\in(Q\setminus\{\rfast\})^+$, $u\in (Q\setminus\{\rfast\})^*$. By the choice of $N$ it follows that all $v_i,v$ have length at most $N$ and by the definition of the CA $F$ it is easy to see that each $u_i$ contains at least $2(\abs{v_i}-1)+1$ occurrences of $\lslow$ and at least $2(\abs{v_i}-1)+1$ occurrences of $\rslow$ (after $\rfast$ turns into $\lslow$, it must return to the nearest wall to the left and back and at least once more turn into $\lslow$ before turning back into $\rfast$. If $\rfast$ were to turn into $\lfast$ instead, it would signify an impassable wall on the right). If we denote by $k_n$ the number of occurrences of $\rfast$ in $w_n$, then $k_n\leq \abs{w_n}/3+\ord(1)$ (this upper bound is achieved by assuming that $\abs{v_i}=1$ for every $i$) and
\[s_n n\leq \abs{w_n}+2k_n\leq \abs{w_n}+\frac{2}{3}\abs{w_n}+\ord(1)\leq\frac{5}{3}n+\ord(1).\]
After dividing this inequality by $n$ and passing to the limit we find that $\lambda^{+}(F)\leq\frac{5}{3}$.\footnote{By performing more careful estimates it can be shown that $\lambda^{+}(F)=1$, but we will not attempt to formalize the argument for this.}

Next assume that there are arbitrarily large $n\in\N$ for which $x_1^{(n)}[i]\in Q$ for some $i>-s_n n$. The difference between $x^{(n)}$ and $y^{(n)}$ can propagate to the right only after the element from $Q$ in $x^{(n)}$ reaches the coordinate $-s_n n$, so without loss of generality there are $0<t_{n,1}<t_{n,2}\leq n$ and $i_n\geq 0$ such that $\pi_1(F^{t_{n,1}}(x^{(n)}))[-s]\in Q$ for some $s\geq s_n n$ and $\pi_1(F^{t_{n,2}}(x^{(n)}))[i_n]\in Q$. From this the contradiction follows in the same way as in the previous paragraph.
\end{proof}

We are ready to prove the result for CA on full shifts.

\begin{theorem}For reversible CA $F:A^\Z\to A^\Z$ such that $\lambda^{+}(F)\in [0,\frac{5}{3}]\cup\{2\}$ it is undecidable whether $\lambda^{+}(F)\leq \frac{5}{3}$ or $\lambda^{+}(F)=2$.\end{theorem}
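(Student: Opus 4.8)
The plan is to reduce \rthe{TheoremLyapSofic} to the present statement by running the very same reduction from \rlem{local} on a genuine full shift in place of the sofic shift $X=Y\times A_2^\Z$. The only role played by soficity of $Y$ was to guarantee that at most one particle from $Q$ is present, which made both the reversibility of $S$ and the bookkeeping of particle trajectories transparent. I would therefore keep the alphabet $A=A_1\times A_2$ and define $F$ on the full shift $A^\Z$ by the same local rules, now permitting configurations with arbitrarily many particles and arbitrary wall patterns. Two points must be re-established: that $F$ remains a reversible CA on $A^\Z$, and that the dichotomy $\lambda^{+}(F)=2$ versus $\lambda^{+}(F)\le\frac{5}{3}$ survives in the presence of many particles.

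Reversibility I would obtain by exhibiting the inverse explicitly. The map $G_2=\id\times G^{10}$ is reversible because $G$ is, the maps building $F_2$ and $J$ are involutions defined cellwise, and the particle-motion CA $S$ is reversible on the full shift provided colliding particles of unlike type are made to pass through one another and each wall-bounce is time-symmetric; its inverse is the CA running the same bouncing dynamics backwards. I would check the finitely many collision windows directly to confirm injectivity, adjusting the rule on them to let unlike particles cross transparently if needed, which alters neither the single-particle behaviour nor the witness construction used below.

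For the locally immortal case the argument transfers verbatim: if $x_2$ is a $(1,5)$-witness, the configuration $y=(x_1,x_2)$ carrying a single $\rfast$ at the origin already lives in the full shift, and comparison with $x=(0^\Z,x_2)$ gives $\pi_1(F^i(y))[2i]=\rfast\neq 0=\pi_1(F^i(x))[2i]$, so $\lambda^{+}(F)\ge 2$, and the bounded radius forces $\lambda^{+}(F)=2$.

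The heart of the proof, and the step I expect to be the main obstacle, is the bound $\lambda^{+}(F)\le\frac{5}{3}$ in the non-immortal case, now that configurations may carry infinitely many particles. I would again fix $N$ with $C(2N)=\emptyset$ and take $x^{(n)},y^{(n)}$ agreeing on all coordinates $i>-s_n n$ with a discrepancy reaching some coordinate $\ge 0$ by time $\le n$, and then track the rightmost coordinate $r(t)$ at which $F^t(x^{(n)})$ and $F^t(y^{(n)})$ differ. On the lower layer the one-sidedness of $G$ forbids any rightward motion of a discrepancy, so $r(t)$ can advance only through the upper layer, where the only speed-$2$ rightward carrier is $\rfast$; crucially, particles and walls lying to the right of the front are identical in $x^{(n)}$ and $y^{(n)}$ and hence evolve identically until the front reaches them, so they cannot accelerate it. It then remains to show that the front cannot ride an $\rfast$ at speed $2$ for long: since $F_2$'s turning condition depends only on the lower layer and $C(2N)=\emptyset$, every $\rfast$ at the front must turn into $\lslow$ (depositing a $\wall$) within at most $N$ steps and then return slowly before any $\rfast$ resumes. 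This reproduces the factorization $w=u(v_1 u_1\cdots v_k u_k)v$ with $\abs{v_i}\le N$ and at least $2(\abs{v_i}-1)+1$ slow occurrences per fast run, whence $s_n n\le\frac{5}{3}n+\ord(1)$ and $\lambda^{+}(F)\le\frac{5}{3}$ after dividing by $n$. The delicate point to get right is that many particles do not enable a relay of successive $\rfast$'s sustaining speed $2$ past the origin; this is excluded precisely because each fast run is capped independently by the lower-layer condition $C(2N)=\emptyset$, so the single-particle analysis of \rthe{TheoremLyapSofic} applies to the difference front verbatim rather than to a unique particle.
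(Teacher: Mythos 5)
Your reduction breaks at exactly the step you flag as delicate, and it breaks for a concrete reason: on a genuine full shift you cannot prevent the common ``background'' from supplying relay fuel. Consider the non-immortal case and build the common part of $x^{(n)},y^{(n)}$ (positions $\ge -s_nn$) as follows: on the lower layer put a configuration on which the $F_2$-swap condition holds everywhere (e.g.\ one avoiding $B$ entirely), and on the upper layer put a suitably spaced stream of $\lfast$ particles drifting left at speed $2$; let $y^{(n)}$ in addition carry a single $\rfast$ just to the left of $-s_nn$. In $y^{(n)}$ that $\rfast$ is swapped by $F_2$ into $\lslow\wall$, and since $F=F_1\circ G_2\circ F_2$ applies $F_2$ \emph{first}, the nearest incoming common $\lfast$ bounces off the freshly deposited wall within the same application of $F$ and becomes an $\rfast$ two cells further right; in $x^{(n)}$ the same $\lfast$ meets no wall and keeps drifting left. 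The bounced $\rfast$ exists only in $y^{(n)}$, so it is a fresh difference carrier; it is swapped in turn, the next $\lfast$ of the stream bounces, and so on. The rightmost difference therefore advances at speed $2$ indefinitely, so $\lambda^{+}(F)=2$ regardless of whether $G$ is $(1,5)$-locally immortal, and your reduction maps every instance to the same answer. This refutes the assertion that particles to the right of the front ``evolve identically until the front reaches them, so they cannot accelerate it'': identical evolution ends precisely when the front reaches them, and at that instant a common $\lfast$ becomes a difference-carrying $\rfast$ in one configuration only. Note that your cap $\abs{v_i}\le N$ is never violated --- no single particle runs fast for long --- the front is simply handed from particle to particle, which is exactly what cannot happen in the sofic shift $Y$, where only one particle exists; so the single-particle accounting does not apply to the difference front.

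There is also a prior, independent gap: the naive multi-particle extension of $S$ to $A_1^\Z$ is not reversible, and ``letting unlike particles cross'' does not repair it, because particles of different speeds can land on the same cell (an $\rfast$ at position $0$ and an $\rslow$ at position $1$ both map into position $2$), which a single $A_1$-symbol cannot record; designing reversible many-particle collision rules is a genuine construction problem, not a local adjustment. The paper resolves both difficulties at once with the conveyor-belt construction adapted from Guillon--Salo: the alphabet is $\Gamma\times A_2$ with $\Gamma\subseteq A_1\times A_1\times\{-,0,+\}$, every configuration decomposes uniquely into belts each carrying at most one particle on a two-track loop, $F_1'$ runs $F_1$ on each folded belt, and $F_2'$ performs its swap only when $\pi_\Delta(x)[i+1]=-$, which forbids any information transfer between belts. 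Differences then remain confined to a single belt, so the one-particle analysis of \rthe{TheoremLyapSofic} applies belt-wise and $\lambda^{+}(F')=\lambda^{+}(F)$; this isolation is precisely what excludes the relay described above.
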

\begin{proof}
Let $G:A_2^\Z\to A_2^\Z$, $A_1$, $F=F_1\circ G_2\circ F_2:X\to X$, etc. be as in the proof of the previous theorem. We will adapt the conveyor belt construction from \cite{GS17} to define a CA $F'$ on a full shift which simulates $F$ and has the same right Lyapunov exponent as $F$.

Denote $Q=\{\lfast,\rfast,\lslow,\rslow\}$, $\Sigma=\{0,\wall\}$, $\Delta=\{-,0,+\}$, define the alphabets
\[\Gamma=(\Sigma^2\times\{+,-\})\cup(Q\times \Sigma\times\{0\})\cup(\Sigma\times Q\times\{0\})\subseteq A_1\times A_1\times\Delta\]
and $A=\Gamma\times A_2$ and let $\pi_{1,1},\pi_{1,2}:A^\Z \to A_1^\Z$, $\pi_\Delta:A^\Z\to\Delta^\Z$, $\pi_2:A^\Z\to A_2^\Z$ be the natural projections $\pi_{1,1}(x)=x_{1,1}$, $\pi_{1,2}(x)=x_{1,2}$, $\pi_\Delta(x)=x_\Delta$, $\pi_2(x)=x_2$ for $x=(x_{1,1},x_{1,2},x_\Delta,x_2)\in A^\Z\subseteq (A_1\times A_1\times \Delta\times A_2)^\Z$. For arbitrary $x=(x_1,x_2)\in(\Gamma\times A_2)^\Z$ define $G_2':A^\Z\to A^\Z$ by $G_2'(x)=(x_1,G^{10}(x_2))$.

Next we define $F_1':A^\Z\to A^\Z$. Every element $x=(x_1,x_2)\in(\Gamma\times A_2)^\Z$ has a unique decomposition of the form
\[(x_1,x_2)=\cdots(w_{-2},v_{-2})(w_{-1},v_{-1})(w_{0},v_{0})\allowbreak(w_{1},v_{1})(w_{2},v_{2})\cdots\]
where
\begin{flalign*}
w_i\in &(\Sigma^2\times\{+\})^*((Q\times \Sigma\times\{0\})\cup(\Sigma\times Q\times\{0\}))(\Sigma^2\times\{-\})^* \\
&\cup(\Sigma^2\times\{+\})^*(\Sigma^2\times\{-\})^*
\end{flalign*}
with the possible exception of the leftmost $w_i$ beginning or the rightmost $w_i$ ending with an infinite sequence from $\Sigma^2\times\{+,-\}$.

Let $(c_i,e_i)\in (\Sigma\times \Sigma)^*((Q\times \Sigma)\cup (\Sigma\times Q))(\Sigma\times \Sigma)^*\cup(\Sigma\times \Sigma)^*$ be the word that is derived from $w_i$ by removing the symbols from $\Delta$. The pair $(c_i,e_i)$ can be seen as a conveyor belt by gluing the beginning of $c_i$ to the beginning of $e_i$ and the end of $c_i$ to the end of $e_i$. The map $F_1'$ will shift arrows like the map $F_1$, and at the junction points of $c_i$ and $e_i$ the arrow can turn around to the opposite side of the belt. More precisely, define the permutation $\rho:A_1\to A_1$ by

\begin{flalign*}
\begin{array}{l l l l l l l}
\rho(0)=0 & \quad & \rho(\wall)=\wall & & & & \\
\rho(\lfast)=\rfast & \quad & \rho(\rfast)=\lfast & \quad & \rho(\lslow)=\rslow & \quad & \rho(\rslow)=\lslow
\end{array}
\end{flalign*}
and for a word $u\in A_1^*$ let $\rho(u)$ denote the coordinatewise application of $\rho$. For any word $w=w[1]\cdots w[n]$ define its reversal by $w^R[i]=w[n+1-i]$ for $1\leq i\leq n$. Then consider the periodic configuration $y=[(c_i,v_i)(\rho(e_i),v_i)^R]^\Z\in(A_1\times A_2)^\Z$. The map $F_1:X\to X$ extends naturally to configurations of the form $y$: $y$ can contain infinitely many arrows, but they all point in the same direction and occur in identical contexts. By applying $F_1$ to $y$ we get a new configuration of the form $[(c_i',v_i)(\rho(e_i'),v_i)^R]$. From this we extract the pair $(c'_i,e'_i)$, and by adding plusses and minuses to the left and right of the arrow (or in the same coordinates as in $(c_i,e_i)$ if there is no occurrence of an arrow) we get a word $w_i'$ which is of the same form as $w_i$. We define $F_1':A^\Z\to A^\Z$ by $F_1'(x)=x'$ where $x'=\cdots(w_{-2}',v_{-2})(w_{-1}',v_{-1})(w_{0}',v_{0})\allowbreak(w_{1}',v_{1})(w_{2}',v_{2})\cdots$. Clearly $F_1'$ is shift invariant, continuous and reversible. 

We define the involution $F_2':A^\Z\to A^\Z$ as follows. For $x\in A^\Z$ and $j\in\{1,2\}$ $F_2'$ replaces an occurrence of $\rfast 0$ in $\pi_{1,j}(x)$ at coordinate $i\in\Z$ by an occurrence of $\lslow\wall$ (and vice versa) \emph{if and only if} $\pi_\Delta(x)[i+1]=-$ and

\begin{flalign*}
&G^j(\pi_2(x))[i]\notin B\mbox{ for some }0\leq j\leq 5 \\
\mbox{or } &G^j(\pi_2(x))[i+1]\notin B \mbox{ for some } 5\leq j\leq 10,
\end{flalign*}
and otherwise $F_2$ makes no changes. $F_2'$ simulates the map $F_2$ and we check the condition $\pi_\Delta(x)[i+1]=-$ to ensure that $F_2'$ does not transfer information between neighboring conveyor belts.

Finally, we define $F'=F_1'\circ G_2'\circ F_2':A^\Z\to A^\Z$. The reversible CA $F'$ simulates $F:X\to X$ simultaneously on two layers and it has the same right Lyapunov exponent as $F$. 
\end{proof}

The following corollary is immediate.

\begin{corollary}There is no algorithm that, given a reversible CA $F:A^\Z\to A^\Z$ and a rational number $\epsilon>0$, returns the Lyapunov exponent $\lambda^{+}(F)$ within precision $\epsilon$. \end{corollary}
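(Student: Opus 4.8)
The plan is to obtain this corollary immediately from the preceding theorem by a reduction argument. Suppose, toward a contradiction, that such an algorithm $\mathcal{A}$ existed: on input a description of a reversible CA $F:A^\Z\to A^\Z$ together with a rational $\epsilon>0$, it returns a rational number $r$ with $\abs{r-\lambda^{+}(F)}<\epsilon$. I would run $\mathcal{A}$ with a single fixed rational $\epsilon$ chosen to satisfy $0<\epsilon<\frac{1}{6}$, where $\frac{1}{6}$ is half the length of the gap between $\frac{5}{3}$ and $2$.

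The key observation is that for the reversible CA $F$ produced by the preceding theorem the promise $\lambda^{+}(F)\in[0,\frac{5}{3}]\cup\{2\}$ holds, so the returned value $r$ satisfies $r\leq\frac{5}{3}+\epsilon$ whenever $\lambda^{+}(F)\leq\frac{5}{3}$ and $r\geq 2-\epsilon$ whenever $\lambda^{+}(F)=2$. Since $\epsilon<\frac{1}{6}$ yields $\frac{5}{3}+\epsilon<2-\epsilon$, these two possible output ranges are disjoint, and in fact both lie on opposite sides of the midpoint $\frac{11}{6}$. Consequently, testing whether $r<\frac{11}{6}$ decides whether $\lambda^{+}(F)\leq\frac{5}{3}$ or $\lambda^{+}(F)=2$, giving an algorithm for a problem that the preceding theorem asserts to be undecidable. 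This contradiction establishes the corollary.

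I expect no genuine obstacle here, which matches the fact that the result is billed as immediate. The only point requiring care is the quantitative choice of $\epsilon$: it must be taken \emph{strictly} below $\frac{1}{6}$ so that the two candidate output intervals $[0-\epsilon,\frac{5}{3}+\epsilon]$ and $[2-\epsilon,2+\epsilon]$ do not overlap, which is precisely what turns the approximation oracle into a valid decision procedure via the comparison with $\frac{11}{6}$.
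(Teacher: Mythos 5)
Your proposal is correct and is exactly the reduction the paper has in mind: the paper offers no written proof, declaring the corollary ``immediate'' from the preceding theorem, and your argument (query the hypothetical algorithm with a fixed $\epsilon<\frac{1}{6}$ and compare the output against the midpoint $\frac{11}{6}$ to separate the promise classes $[0,\frac{5}{3}]$ and $\{2\}$) is the standard way to spell that out. Your attention to choosing $\epsilon$ strictly below half the gap so the output intervals cannot overlap is the right quantitative care.
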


\section{Lyapunov Exponents of Multiplication \\Automata}

In this section we present a class of multiplication automata which perform multiplication by nonnegative numbers in some integer base. After the definitions and preliminary lemmas we compute their average Lyapunov exponents.

For this section denote $\digs_n=\{0,1,\dots,n-1\}$ for $n\in\N$, $n>1$. To perform multiplication using a CA we need be able to represent a nonnegative real number as a configuration in $\digs_n^{\Z}$. If $\xi\geq0$ is a real number and $\xi=\sum_{i=-\infty}^{\infty}{\xi_i n^i}$ is the unique base-$n$ expansion of $\xi$ such that $\xi_i\neq n-1$ for infinitely many $i<0$, we define $\config_n(\xi)\in \digs_n^{\Z}$ by
\[\config_n(\xi)[i]=\xi_{-i}\]
for all $i\in\Z$. In reverse, whenever $x\in \digs_n^{\Z}$ is such that $x[i]=0$ for all sufficiently small $i$, we define
\[\real_n(x)=\sum_{i=-\infty}^{\infty}{x[-i] n^i}.\]
For words $w=w[1]w[2]\cdots w[k]\in \digs_n^k$ we define analogously
\[\real_n(w)=\sum_{i=1}^{k}{w[i]n^{-i}}.\]
Clearly $\real_n(\config_n(\xi))=\xi$ and $\config_n(\real_n(x))=x$ for every $\xi\geq0$ and every $x\in \digs_n^{\Z}$ such that $x[i]=0$ for all sufficiently small $i$ and $x[i]\neq n-1$ for infinitely many $i>0$.

The fractional part of a number $\xi\in\R$ is
\[\fractional(\xi)=\xi-\lfloor \xi\rfloor\in[0,1).\]

For integers $p,n\geq2$ where $p$ divides $n$ let $\mul_{p,n}:\digs_{n}\times \digs_{n}\to \digs_{n}$ be defined as follows. Let $q$ be such that $pq=n$. Digits $a,b\in \digs_{pq}$ are represented as $a=a_1q+a_0$ and $b=b_1q+b_0$, where $a_0,b_0\in \digs_q$ and $a_1,b_1\in \digs_p$: such representations always exist and they are unique. Then
\[\mul_{p,n}(a,b)=\mul_{p,n}(a_1q+a_0,b_1q+b_0)=a_0p+b_1.\]
An example in the particular case $(p,n)=(3,6)$ is given in Figure \ref{taul}.

\begin{figure}[h]
\centering
\begin{tabular} {c | c c c c c c}
$a\backslash b$ & 0 & 1 & 2 & 3 & 4 & 5 \\ \hline
0 & 0 & 0 & 1 & 1 & 2 & 2 \\
1 & 3 & 3 & 4 & 4 & 5 & 5 \\
2 & 0 & 0 & 1 & 1 & 2 & 2 \\
3 & 3 & 3 & 4 & 4 & 5 & 5 \\
4 & 0 & 0 & 1 & 1 & 2 & 2 \\
5 & 3 & 3 & 4 & 4 & 5 & 5 \\
\end{tabular}
\caption{The values of $\mul_{3,6}(a,b)$.}
\label{taul}
\end{figure}

We define the CA $\Mul_{p,n}:\digs_{n}^{\Z}\to \digs_{n}^{\Z}$ by $\Mul_{p,n}(x)[i]=\mul_{p,n}(x[i],x[i+1])$, so $\Mul_{p,n}$ has memory $0$ and anticipation $1$. The CA $\Mul_{p,n}$ performs multiplication by $p$ in base $n$ in the sense of the following lemma.

\begin{lemma}\label{vastaavuus}$\real_{n}(\Mul_{p,n}(\config_{n}(\xi)))=p\xi$ for all $\xi\geq 0$.\end{lemma}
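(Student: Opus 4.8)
The plan is to verify the identity $\real_n(\Mul_{p,n}(\config_n(\xi)))=p\xi$ by a direct digit-by-digit computation, carefully tracking how the local rule $\mul_{p,n}$ redistributes the base-$q$ and base-$p$ components of adjacent digits. First I would set $x=\config_n(\xi)$ and write each digit $x[i]=\xi_{-i}\in\digs_n$ in its split form $x[i]=a_1(i)q+a_0(i)$ with $a_0(i)\in\digs_q$, $a_1(i)\in\digs_p$. By definition of the CA, the output digit at position $i$ is $\Mul_{p,n}(x)[i]=a_0(i)p+a_1(i+1)$. The key observation is that the product $p\xi$ in base $n=pq$ should, intuitively, shift the ``high part'' $a_1$ of each digit up by one place while scaling the ``low part'' $a_0$ by $p$; the local rule is engineered to do exactly this carry-free recombination.

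The core of the argument is an algebraic manipulation of the sum $\real_n(\Mul_{p,n}(x))=\sum_{i}\bigl(a_0(i)p+a_1(i+1)\bigr)n^{-i}$ (after aligning indices via $\real_n$, which reads position $-i$). The plan is to split this into two sums, $p\sum_i a_0(i)n^{-i}$ and $\sum_i a_1(i+1)n^{-i}$, and then re-index the second sum so that it becomes $n\sum_i a_1(i)n^{-i}=pq\sum_i a_1(i)n^{-i}$. Combining, one obtains $p\sum_i\bigl(a_0(i)+q\,a_1(i)\bigr)n^{-i}=p\sum_i x[i]n^{-i}=p\,\real_n(x)=p\xi$, since $a_0(i)+qa_1(i)$ is precisely the original digit $x[i]$. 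This is the heart of the matter: the factor $p$ pulls out cleanly once the shift on the $a_1$-part is absorbed into the base.

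The step I expect to be the main obstacle is handling convergence and the boundary behavior of the infinite sums, rather than the purely formal algebra. Specifically, I must justify that $x=\config_n(\xi)$ has $x[i]=0$ for all sufficiently small $i$ (equivalently $\xi_{-i}=0$ for large positive $i$, which holds since $\xi\geq 0$ has a genuine base-$n$ expansion), so that $\real_n$ is well-defined on both $x$ and $\Mul_{p,n}(x)$ and the re-indexing of a conditionally-given series is legitimate. One should check that $\Mul_{p,n}(x)$ also vanishes at sufficiently small positions, which follows because $a_0(i)=a_1(i+1)=0$ whenever $x[i]=x[i+1]=0$. A secondary subtlety is that $\real_n$ applied to the output need not land on the canonical expansion (the one avoiding the digit $n-1$ infinitely often), but this is harmless: the claim concerns the real number computed by $\real_n$, which is base-expansion-agnostic as a sum, so no normalization argument is needed. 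Once convergence is secured, the interchange and re-indexing of the two series is routine and the identity drops out immediately.
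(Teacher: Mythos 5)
Your proof is correct. The paper itself omits the proof of this lemma (deferring to Kari's lecture notes) and records only the guiding idea --- the local rule mimics the school multiplication algorithm, and because $p$ divides the base $n=pq$ the carries cannot propagate more than one cell --- so your digit-splitting computation, with the low part $a_0(i)$ scaled by $p$ and the high part $a_1(i+1)$ shifted one place up and absorbed via $n=pq$, is precisely a formalization of the sketch the paper gives; the regrouping $p\sum_i a_0(-i)n^i + n\sum_i a_1(-i)n^i = p\sum_i x[-i]n^i$ is exactly the statement that each digit product splits into a kept part and a carry that lands one place higher without cascading. One small indexing slip in your convergence discussion: the condition you need is $\xi_j=0$ for all sufficiently large $j$ (equivalently $x[i]=0$ for all sufficiently negative $i$, since $x[i]=\xi_{-i}$ and a real number has only finitely many nonzero digits before the radix point), not ``$\xi_{-i}=0$ for large positive $i$''; with that corrected, all series involved are nonnegative with geometric tails, so the splitting and re-indexing you perform are legitimate, and the rest of the argument stands as written.
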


We omit the proof of the lemma, which can be found for example in~\cite{Kari12b}. The idea of the proof is to notice that the local rule $\mul_{p,n}$ mimics the usual school multiplication algorithm. Because $p$ divides $n$, the carry digits cannot propagate arbitrarily far to the left.

For the statement of the following lemmas, which were originally proved in~\cite{KK17}, we define a function $\integ:\digs_{pq}^+\to\N$ by
\[\integ(w[1]w[2]\cdots w[k])=\sum_{i=0}^{k-1}w[k-i](pq)^i,\]
i.e. $\integ(w)$ is the integer having $w$ as a base-$pq$ representation.

\begin{lemma}Let $w_1,w_2\in \digs_{pq}^k$ for some $k\geq 2$ and let $t>0$ be a natural number. Then
\begin{enumerate}
\item $\integ(w_1)<q^t \implies \integ(\mul_{p,pq}(w_1))<q^{t-1}$ and
\item $\integ(w_2)\equiv \integ(w_1)+q^t\pmod{(pq)^k} \\ \implies \integ(\mul_{p,pq}(w_2))\equiv \integ(\mul_{p,pq}(w_1))+q^{t-1} \pmod{(pq)^{k-1}}$.
\end{enumerate}
\end{lemma}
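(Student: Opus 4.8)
The plan is to reduce both statements to a single exact identity expressing $\integ(\mul_{p,pq}(w))$ in terms of $\integ(w)$. Write $w=w[1]\cdots w[k]$ and decompose each digit as $w[i]=q\,a_1^{(i)}+a_0^{(i)}$ with $a_1^{(i)}=\lfloor w[i]/q\rfloor\in\digs_p$ and $a_0^{(i)}\in\digs_q$, exactly as in the definition of $\mul_{p,pq}$. By the word extension of the local rule, $u=\mul_{p,pq}(w)$ is the word of length $k-1$ with $u[i]=p\,a_0^{(i)}+a_1^{(i+1)}$. I would prove the formula
\begin{equation}\label{keyformula}
\integ(\mul_{p,pq}(w))=\left\lfloor\frac{\integ(w)}{q}\right\rfloor-\left\lfloor\frac{w[1]}{q}\right\rfloor(pq)^{k-1},
\end{equation}
whose meaning is that finite-word multiplication by $p$ agrees with the true division-by-$q$ behaviour of $\integ$ except for two boundary corrections: the high digit $a_1^{(1)}$ of $w[1]$, which in the bi-infinite picture of \rlem{vastaavuus} would be carried out to the left as an integer-part contribution, and the low digit $a_0^{(k)}$ of $w[k]$, whose product digit is truncated for lack of $w[k+1]$.

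To establish \eqref{keyformula} I would compute $q\cdot\integ(u)$ directly. Expanding $q\,\integ(u)=\sum_{i=1}^{k-1}\bigl(pq\,a_0^{(i)}+q\,a_1^{(i+1)}\bigr)(pq)^{k-1-i}$, using $pq\,(pq)^{k-1-i}=(pq)^{k-i}$, and reindexing the second sum by $j=i+1$, one compares term by term with $\integ(w)=\sum_{j=1}^{k}\bigl(q\,a_1^{(j)}+a_0^{(j)}\bigr)(pq)^{k-j}$. The two expansions coincide except that $q\,\integ(u)$ is missing the $j=k$ term $a_0^{(k)}$ of the $a_0$-part and the $j=1$ term $q\,a_1^{(1)}(pq)^{k-1}$ of the $a_1$-part, so $q\,\integ(u)=\integ(w)-a_0^{(k)}-q\,a_1^{(1)}(pq)^{k-1}$. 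Since $a_0^{(k)}=w[k]\bmod q=\integ(w)\bmod q$ (all other terms of $\integ(w)$ are divisible by $q$), dividing by $q$ yields \eqref{keyformula}. This bookkeeping — the single index shift and the correct identification of the two surviving boundary digits — is the only delicate point; everything else is routine.

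Given \eqref{keyformula}, both parts follow quickly. For the first, $\lfloor w_1[1]/q\rfloor\geq 0$ gives $\integ(\mul_{p,pq}(w_1))\leq\lfloor\integ(w_1)/q\rfloor\leq\integ(w_1)/q$, so $\integ(w_1)<q^t$ forces $\integ(\mul_{p,pq}(w_1))<q^{t-1}$. For the second, reducing \eqref{keyformula} modulo $(pq)^{k-1}$ kills the second term, leaving $\integ(\mul_{p,pq}(w))\equiv\lfloor\integ(w)/q\rfloor\pmod{(pq)^{k-1}}$; it then suffices to show that $\integ(w_2)\equiv\integ(w_1)+q^t\pmod{(pq)^k}$ implies $\lfloor\integ(w_2)/q\rfloor\equiv\lfloor\integ(w_1)/q\rfloor+q^{t-1}\pmod{(pq)^{k-1}}$. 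Writing $\integ(w_2)=\integ(w_1)+q^t+c(pq)^k$ and using that $q$ divides both $q^t$ and $(pq)^k$ (so the two integers agree modulo $q$ and the floors differ by the exact quotient), subtraction and division by $q$ give $\lfloor\integ(w_2)/q\rfloor-\lfloor\integ(w_1)/q\rfloor=q^{t-1}+c\,p\,(pq)^{k-1}\equiv q^{t-1}\pmod{(pq)^{k-1}}$, as required.
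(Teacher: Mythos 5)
Your proof is correct, but it takes a genuinely different route from the paper's. The paper never computes with the local rule on finite words: it pads $w_i$ out to a bi-infinite configuration $x_i\in\digs_{pq}^\Z$ that is zero outside a window, invokes Lemma~\ref{vastaavuus} (that $\real_{pq}(\Mul_{p,pq}(x))=p\,\real_{pq}(x)$) to get $\real_{pq}(y_i)=p\integ(w_i)$ for $y_i=\Mul_{p,pq}(x_i)$, and from this reads off only the congruence $\integ(\mul_{p,pq}(w_i))\equiv\lfloor\integ(w_i)/q\rfloor\pmod{(pq)^{k-1}}$ together with the a priori bound $\integ(\mul_{p,pq}(w_i))<(pq)^{k-1}$; part 1 then follows because a nonnegative integer below $(pq)^{k-1}$ that is congruent to the nonnegative integer $\lfloor\integ(w_1)/q\rfloor$ modulo $(pq)^{k-1}$ cannot exceed it. You instead stay entirely inside finite words and prove the \emph{exact} identity
\[
\integ(\mul_{p,pq}(w))=\left\lfloor\frac{\integ(w)}{q}\right\rfloor-\left\lfloor\frac{w[1]}{q}\right\rfloor(pq)^{k-1},
\]
which is strictly sharper than what the paper extracts: it identifies the error term precisely, makes part 1 immediate (the correction is nonnegative, so no congruence-plus-range argument is needed), and reduces modulo $(pq)^{k-1}$ to the paper's congruence for part 2, whose remaining floor-arithmetic step ($q$ divides both $q^t$ and $(pq)^k$, so the floors subtract exactly) is essentially identical in both proofs. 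What the paper's approach buys is brevity given that Lemma~\ref{vastaavuus} is already on the table, reusing the ``the CA really multiplies by $p$'' machinery; what yours buys is self-containedness (it does not depend on Lemma~\ref{vastaavuus}, whose proof the paper omits and cites elsewhere) and a cleaner logical structure via the exact boundary-correction formula. Your index-shift computation establishing the identity is correct, as is the observation $a_0^{(k)}=w[k]\bmod q=\integ(w)\bmod q$ that converts it into a floor.
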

\begin{proof}
Let $x_i\in \digs_{pq}^\Z$ ($i=1,2$) be such that $x_i[-(k-1),0]=w_i$ and $x_i[j]=0$ for $j<-(k-1)$ and $j>0$. From this definition of $x_i$ it follows that $\integ(w_i)=\real_{pq}(x_i)$. Denote $y_i=\Mul_{p,pq}(x_i)$. We have
\[\sum_{j=-\infty}^{\infty}y_i[-j](pq)^j=\real_{pq}(y_i)=p\real_{pq}(x_i)=p\integ(w_i)\]
and
\begin{flalign*}
\integ(\mul_{p,pq}(w_i))&=\integ(y_i[-(k-1),-1]) \\
&=\sum_{j=1}^{k-1}y_i[-j](pq)^{j-1}\equiv\lfloor \integ(w_i)/q\rfloor\pmod{(pq)^{k-1}}.
\end{flalign*}
Also note that $\integ(\mul_{p,pq}(w_i))<(pq)^{k-1}$.

For the proof of the first part, assume that $\integ(w_1)<q^t$. Combining this with the observations above yields $\integ(\mul_{p,pq}(w_1))\leq\lfloor \integ(w_1)/q\rfloor<q^{t-1}$.

\begin{sloppypar}
For the proof of the second part, assume that $\integ(w_2)\equiv \integ(w_1)+q^t\pmod{(pq)^k}$. Then there exists $n\in\Z$ such that $\integ(w_2)=\integ(w_1)+q^t+n(pq)^k$ and
\end{sloppypar}
\begin{flalign*}
\integ(\mul_{p,pq}(w_2))&\equiv \lfloor \integ(w_2)/q\rfloor\equiv\lfloor \integ(w_1)/q\rfloor+q^{t-1}+np(pq)^{k-1} \\
&\equiv\lfloor \integ(w_1)/q\rfloor+q^{t-1}\equiv \integ(\mul_{p,pq}(w_1))+q^{t-1}\pmod{(pq)^{k-1}}.
\end{flalign*}
\end{proof}

\begin{lemma}\label{godometer}Let $t>0$ and $w_1,w_2\in \digs_{pq}^k$ for some $k\geq t+1$.
\begin{enumerate}
\item If $\integ(w_1)<q^{t}$, then $\integ(\mul_{p,pq}^t(w_1))=0$.
\item If $\integ(w_2)\equiv \integ(w_1)+q^{t}\pmod{(pq)^k}$, then \\ $\integ(\mul_{p,pq}^t(w_2))\equiv \integ(\mul_{p,pq}^t(w_1))+1 \pmod{(pq)^{k-t}}$.
\end{enumerate}
\end{lemma}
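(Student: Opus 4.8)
The plan is to prove both parts by induction on $t$, using the immediately preceding single-step lemma as the engine and exploiting the fact that $\mul_{p,pq}$ shortens words by one, so that $\mul_{p,pq}^{t}(w)$ has length $k-t$. The base case $t=1$ is nothing but that lemma: its first part gives $\integ(\mul_{p,pq}(w_1))<q^{0}=1$, whence $\integ(\mul_{p,pq}(w_1))=0$ since the quantity is a nonnegative integer, and its second part gives $\integ(\mul_{p,pq}(w_2))\equiv\integ(\mul_{p,pq}(w_1))+q^{0}\pmod{(pq)^{k-1}}$, i.e. an additive term $1$, which is exactly the assertion for $t=1$.

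For the inductive step I would assume the statement for $t-1$ (at every admissible length) and take $w_1,w_2\in\digs_{pq}^{k}$ with $k\geq t+1$ satisfying the respective hypotheses for exponent $t$. First apply the single-step lemma once to pass to $\mul_{p,pq}(w_1)$ and $\mul_{p,pq}(w_2)$, which have length $k-1$: in the first part this produces $\integ(\mul_{p,pq}(w_1))<q^{t-1}$, and in the second part $\integ(\mul_{p,pq}(w_2))\equiv\integ(\mul_{p,pq}(w_1))+q^{t-1}\pmod{(pq)^{k-1}}$. These are precisely the hypotheses needed to invoke the induction hypothesis at exponent $t-1$ on the length-$(k-1)$ words, which is legitimate because $k-1\geq(t-1)+1$ is the same as the standing assumption $k\geq t+1$. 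The induction hypothesis then yields $\integ(\mul_{p,pq}^{t-1}(\mul_{p,pq}(w_1)))=\integ(\mul_{p,pq}^{t}(w_1))=0$ for the first part and $\integ(\mul_{p,pq}^{t}(w_2))\equiv\integ(\mul_{p,pq}^{t}(w_1))+1\pmod{(pq)^{(k-1)-(t-1)}}$ for the second; since $(k-1)-(t-1)=k-t$, this closes the induction.

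This argument is essentially bookkeeping, so I do not expect a genuine obstacle; the only point requiring care is that the two hypotheses of the single-step lemma survive every iteration. Concretely, one must check that the word fed into each single-step application has length at least $2$ and that the relevant exponent stays positive as it descends from $t$ to $1$. Both follow from $k\geq t+1$: the input to the $s$-th application (for $1\leq s\leq t$) has length $k-s+1\geq k-t+1\geq 2$, and the exponent used there is $t-s+1\geq 1$. Keeping these simultaneous decrements aligned, namely length $k\mapsto k-1$, exponent $t\mapsto t-1$, and modulus $(pq)^{k}\mapsto(pq)^{k-1}$, is the whole content of the proof.
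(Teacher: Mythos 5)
Your proof is correct and is essentially the paper's own argument: the paper simply states that both claims follow by repeated application of the preceding single-step lemma, and your induction on $t$ with the length/exponent/modulus bookkeeping is exactly that repetition made explicit.
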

\begin{proof}                                                       
Both claims follow by repeated application of the previous lemma.
\end{proof}

The content of Lemma \ref{godometer} is as follows. Assume that $\{w_i\}_{i=0}^{(pq)^k-1}$ is the enumeration of all the words in $\digs_{pq}^k$ in the lexicographical order, meaning that $w_0=00\cdots 00$, $w_1=00\cdots 01$, $w_2=00\cdots 02$ and so on. Then let $i$ run through all the integers between $0$ and $(pq)^k-1$. For the first $q^{t}$ values of $i$ we have $\mul_{p,pq}^t(w_i)=00\cdots 00$, for the next $q^{t}$ values of $i$ we have $\mul_{p,pq}^t(w_i)=00\cdots 01$, and for the following $q^{t}$ values of $i$ we have $\mul_{p,pq}^t(w_i)=00\cdots 02$. Eventually, as $i$ is incremented from $q^{t}(pq)^{k-t}-1$ to $q^{t}(pq)^{k-t}$, the word $\mul_{p,pq}^t(w_i)$ loops from $(pq-1)(pq-1)\cdots (pq-1)(pq-1)$ back to $00\cdots 00$.

From now on let $p,q>1$ be coprime integers. We consider the Lyapunov exponents of the multiplication automaton $\Mul_{p,pq}$. Since $\Mul_{p,pq}$ has memory $0$ and anticipation $1$, it is easy to see that for any $x\in \digs_{pq}^\Z$ we must have $\lambda^{+}(x)=0$ and $\lambda^{-}(x)\leq 1$ and therefore $\lambda^{+}(\Mul_{p,pq})=0$, $\lambda^{-}(\Mul_{p,pq})\leq 1$.

Now consider a positive integer $m>0$. Multiplying $m$ by $p^n$ yields a number whose base-$pq$ representation has length approximately equal to $\log_{pq}(mp^n)=n(\log_{pq}p)+\log_{pq} m$. By translating this observation to the configuration space $\digs_{pq}^\Z$ it follows that $\lambda^{-}(0^\Z,\Mul_{p,pq})=\log_{pq}p$. One might be tempted to conclude from this that $\lambda^{-}(\Mul_{p,pq})=\log_{pq}p$. It turns out that this conclusion is not true.

\begin{theorem}
For coprime $p,q>1$ there is a configuration $x\in\digs_{pq}^\Z$ such that $\lambda^{-}(x,\Mul_{p,pq})=1$. In particular $\lambda^{-}(\Mul_{p,pq})=1$.
\end{theorem}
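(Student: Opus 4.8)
The plan is to exhibit a single configuration $x$ for which $\Mul_{p,pq}^t(x)$ contains, for every sufficiently large $t$, a run of the maximal digit $pq-1$ whose length is of order $t\log_{pq}q$, and to show that each such run forces a leftward error propagation at speed essentially $1$. Since $\Mul_{p,pq}$ has memory $0$ and anticipation $1$, the value $\Mul_{p,pq}^t(x)[j]$ depends only on $x[j],\dots,x[j+t]$; hence if two configurations agree on all coordinates $\le n$ their images under $\Mul_{p,pq}^i$ agree on all coordinates $\le 0$ for $i\le n$, so $\Lambda_n^-(x)\le n$ and $\lambda^-(x)\le 1$ for every $x$. It therefore suffices to produce $x$ with $\overline{\Lambda}_n^-(x)\ge n$ for all large $n$.

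The engine is the odometer behaviour of \rlem{godometer}. Reading a length-$k$ window $w=x[\ell]\cdots x[\ell+k-1]$ as the integer $\integ(w)$, the computation underlying \rlem{godometer} gives $\integ(\Mul_{p,pq}^t(w))\equiv\lfloor\integ(w)/q^t\rfloor\pmod{(pq)^{k-t}}$, and part (2) of that lemma says that replacing $\integ(w)$ by $\integ(w)+q^t$ replaces this value by its successor. I will choose the window content so that $V:=\lfloor\integ(w)/q^t\rfloor$ has its lowest $k-t$ base-$pq$ digits all equal to $pq-1$; this is possible because $V$ ranges over $[0,(pq)^{k-t\log_{pq}q})$ and $t\log_{pq}q<t$ leaves $t\log_{pq}p>0$ free high digits in which to prescribe the low $k-t$ digits. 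Then $\Mul_{p,pq}^t(w)$ is the all-$(pq-1)$ word of length $k-t$. Now take $y$ obtained from $x$ by replacing $w$ with $w'$, where $\integ(w')=\integ(w)+q^t$. The digits of $q^t$ occupy the lowest $L=\lceil t\log_{pq}q\rceil$ positions of the window, so after choosing the low digits of $w$ to stop the carry, $y$ differs from $x$ only at the input coordinates $[\ell+k-L,\ell+k-1]$, while by \rlem{godometer}(2) the output value becomes $V+1$, whose $+1$ rolls the entire all-$(pq-1)$ run over to zeros and thereby changes the most significant output coordinate $\ell$.

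Fix the window with $\ell=0$ and $k=t+L$; for the shift-invariant quantity $\overline{\Lambda}_t^-$ I may translate $x$ so that the window occupies $[0,k-1]$ and its output run sits at $[0,k-1-t]$. Then $y$ agrees with $x$ on coordinates $\le k-L-1=t-1$, yet $\Mul_{p,pq}^t(y)$ and $\Mul_{p,pq}^t(x)$ differ at coordinate $0$ (which lies in the light cone of the input coordinate $t$ where they first differ), so $\Lambda_t^-$ of this translate is at least $t$ and $\overline{\Lambda}_t^-(x)\ge t$. Placing one such window for every large $t$ at pairwise far-apart coordinates, each followed on its right by at least $t$ symbols $0$ so that the windows evolve independently under the relevant power of $\Mul_{p,pq}$, yields a single $x$ with $\overline{\Lambda}_n^-(x)\ge n$ for all large $n$; together with $\overline{\Lambda}_n^-(x)\le n$ this gives $\lambda^-(x)=1$, and finally $\lambda^-(\Mul_{p,pq})\ge\lambda^-(x)=1\ge\lambda^-(\Mul_{p,pq})$.

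The delicate point, and the step I expect to need the most care, is the simultaneous bookkeeping at the two ends of the window: the perturbation must be pushed as far right as the light cone permits (so the agreement radius is exactly $t-1$, not smaller), which pins down $k=t+L$ and forces the output run to have length precisely $k-t=L\approx t\log_{pq}q$. One must verify that prescribing the high part $V$ (for the run) and the low part $\integ(w)\bmod q^t$ (to kill the input carry at coordinate $t$) can be arranged together, and that the separating blocks of $0$'s genuinely decouple the finitely many coordinates relevant to each power $t$. Obtaining the limit, rather than merely the $\limsup$, equal to $1$ is exactly what forces us to insert a good window for \emph{every} large $t$, not just along a sparse sequence.
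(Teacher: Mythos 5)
Your underlying mechanism is the right one, and it is in fact the same mechanism as the paper's: create in the image a long run of digits $pq-1$ through which a low-order perturbation carries all the way to the most significant affected coordinate. The difference is that the paper implements this globally (with $\config_{pq}(q^n-1)$ and $\config_{pq}(q^n)$, \rlem{vastaavuus}, and a compactness argument), while you implement it with finite windows and \rlem{godometer}. However, the step you yourself flag as delicate is not merely delicate --- as stated it is impossible, because your two prescriptions on $w$ are not independent. Suppose $V=\lfloor\integ(w)/q^t\rfloor\equiv-1\pmod{(pq)^{k-t}}$ with $k-t\ge L=\lceil t\log_{pq}q\rceil$, and write $\integ(w)=q^tV+r$ with $0\le r<q^t$, $V=M(pq)^{k-t}-1$. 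Then $q^tV\equiv-q^t\pmod{(pq)^L}$, hence
\[
\integ(w)\bmod(pq)^L \;=\; (pq)^L-(q^t-r)\;\ge\;(pq)^L-q^t,
\]
so $\bigl(\integ(w)\bmod(pq)^L\bigr)+q^t\ge(pq)^L$. In words: forcing the all-$(pq-1)$ output run already forces the lowest $L$ digits of $w$ into the top $q^t$ values of their range, which are exactly the values that overflow when $q^t$ is added. The input carry therefore always crosses position $L$, so $w'$ differs from $w$ at coordinate $\ell+t-1$ as well, and the agreement radius $t-1$ you claim cannot be achieved. (Sanity check with $p=2$, $q=3$, $t=L=1$, $k=2$: the windows with $\mul_{2,6}(w)=5$ are those with $\integ(w)\in\{15,16,17,33,34,35\}$, and adding $3$ to any of these changes both digits.)

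The damage is small, but it must be acknowledged and repaired. The cheap repair is to accept the spill: choose $M=1$ (note $pq\nmid q^t$), so that $w$ and $w'$ differ exactly in positions $0,\dots,L$ from the right; you then get agreement on $(-\infty,t-2]$ and only $\overline{\Lambda}_t^-(x)\ge t-1$, which still gives $\lambda^-(x)=1$ in the limit. (Alternatively one can demand the $(pq-1)$-run only on the lowest $k-t-1$ output digits, which leaves just enough freedom --- choosing $q^tM\bmod pq$ to be the largest multiple of $q$ and $r$ close to $q^t$ --- to genuinely stop the carry, but this needs real case analysis, not the independence claim in your sketch.) Two further remarks: your worry about decoupling the windows is vacuous, since with $k=t+L$ the output window after $t$ steps is a function of the input window alone; and the paper's route avoids all of this bookkeeping at once --- $q^n-1$ and $q^n$ differ in a single digit (no input carry to control, since $pq\nmid q^n$), \rlem{vastaavuus} gives images $(pq)^n-p^n$ and $(pq)^n$ differing at coordinate $-n$, and one compactness extraction produces a single configuration witnessing $\overline{\Lambda}_n^-\ge n$ for all $n$ simultaneously, with no need to place infinitely many windows.
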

\begin{proof}
For every $n\in\Npos$ define $x_n=\config_{pq}(q^n-1)$ and $y_n=\config_{pq}(q^n)$. By Lemma \ref{vastaavuus}, $\real(\Mul_{p,pq}^n(x_n))=p^n(q^n-1)<(pq)^n$ and $\real(\Mul_{p,pq}^n(y_n))=p^n q^n=(pq)^n$, which means that $\Mul_{p,pq}^n(x_n)[-n]=0$ and $\Mul_{p,pq}^n(y_n)[-n]=1$. Since $\Mul_{p,pq}$ has memory $0$ and anticipation $1$, it follows that $\Mul_{p,pq}^i(x_n)[-i]\neq \Mul_{p,pq}^i(y_n)[-i]$ when $0\leq i\leq n$ (note that $q^n$ isn't divisible by $pq$ for any $n\in\Npos$, which means that $x_n$ and $y_n$ differ only at the origin). Then choose $x,y\in \digs_{pq}^\Z$ such that $(x,y)\in \digs_{pq}^\Z\times \digs_{pq}^\Z$ is the limit of some converging subsequence of $((x_n,y_n))_{n\in\Npos}$. Then $x$ and $y$ differ only at the origin and $\Mul_{p,pq}^i(x)[-i]\neq \Mul_{p,pq}^i(y)[-i]$ for all $i\in\N$. It follows that $\lambda^{-}(x,\Mul_{p,pq})=1$.
\end{proof}

The intuition that the left Lyapunov exponent of $\Mul_{p,pq}$ ``should be'' equal to $\log_{pq}p$ is explained by the following computation of the average Lyapunov exponent.

\begin{theorem}\label{mulmeasLyap}For coprime $p,q>1$ we have $I_{\mu}^{-}(\Mul_{p,pq})=\log_{pq}p$, where $\mu$ is the uniform measure on $\digs_{pq}^\Z$.\end{theorem}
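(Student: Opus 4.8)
The starting point is an explicit formula for $\Lambda_{n}^{-}(x,\Mul_{p,pq})$. Since $\Mul_{p,pq}$ has memory $0$ and anticipation $1$, the symbol $\Mul_{p,pq}^{i}(x)[m]$ depends only on the window $x[m]\cdots x[m+i]$, and applying \rlem{godometer} with $k=i+1$, $t=i$ (so that $\mul^i$ produces a single symbol) gives $\Mul_{p,pq}^{i}(x)[m]=\lfloor \integ(x[m]\cdots x[m+i])/q^{i}\rfloor \bmod pq$. For $m\le 0$ and $1\le i\le n$ let $r_i^{(m)}(x)$ be the least $s$ such that this residue is already determined by the digits $x[m]\cdots x[s]$, i.e.\ the least $s$ for which $\lfloor \integ(x[m]\cdots x[m+i])/q^{i}\rfloor \bmod pq$ stays constant as $x[s+1]\cdots x[m+i]$ ranges over all of $\digs_{pq}^{m+i-s}$. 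Unwinding the definitions of $W_{s}^{-}$ and $W_{0}^{-}$, fixing $x$ on $(-\infty,s]$ pins down $\Mul_{p,pq}^{i}(x)$ on $(-\infty,0]$ for every $i\le n$ exactly when $s$ dominates each $r_i^{(m)}(x)$, so
\[ \Lambda_{n}^{-}(x,\Mul_{p,pq}) \;=\; \max\Bigl(0,\ \max_{m\le 0,\ 1\le i\le n} r_i^{(m)}(x)\Bigr). \]
By shift invariance $r_i^{(m)}(x)=m+\tilde r_i(\sigma^{m}x)$, where $\tilde r_i(z)=r_i^{(0)}(z)$ records how far to the right one must read $z$ to determine $\Mul_{p,pq}^{i}(z)[0]$.

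The heart of the argument is a tail estimate for $\tilde r_i$ under the uniform measure, where the digits are i.i.d.\ uniform and hence $N=\integ(z[0]\cdots z[i])$ is uniform on $\{0,\dots,(pq)^{i+1}-1\}$. Freezing the prefix $z[0]\cdots z[s]$ and letting the suffix vary, $N$ ranges over an interval of length $(pq)^{i-s}$, so $\lfloor N/q^{i}\rfloor$ sweeps through about $(pq)^{i-s}/q^{i}=p^{i-s}q^{-s}$ consecutive integers. When this count is at least $pq$ the residue mod $pq$ is certainly non-constant, forcing $\tilde r_i\ge s$; solving $p^{i-s}q^{-s}\ge pq$ yields the deterministic lower bound $\tilde r_i(z)\ge i\log_{pq}p-\ord(1)$. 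Once $s>i\log_{pq}p$ the count drops below $1$, and the residue changes only when the frozen fractional offset straddles a multiple of $q^{i}$, an event of probability $\ord(p^{i-s}q^{-s})$ that decays geometrically past the threshold $i\log_{pq}p$. Coprimality of $p$ and $q$ is exactly what makes these offsets equidistribute (equivalently, it is what makes $p$ invertible modulo the powers of $q$ occurring in \rlem{godometer}), ruling out degenerate alignments. Summing $\mu(\tilde r_i\ge s)$ over $s$ then gives $\int \tilde r_i\, d\mu = i\log_{pq}p+\ord(1)$ together with a uniform-in-$i$ exponential upper tail $\mu(\tilde r_i\ge i\log_{pq}p+u)=\ord(c^{u})$ for some $c<1$.

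With these estimates the two inequalities follow. For the lower bound I take $m=0$, $i=n$, so $\Lambda_{n}^{-}(x,\Mul_{p,pq})\ge \tilde r_n(x)$ and $I_{n,\mu}^{-}(\Mul_{p,pq})/n\ge \int \tilde r_n\, d\mu /n=\log_{pq}p+\ord(1/n)$. For the upper bound I write $r_i^{(m)}(x)=m+i\log_{pq}p+(\tilde r_i(\sigma^{m}x)-i\log_{pq}p)$ and note that $m+i\log_{pq}p\le n\log_{pq}p$ for all $m\le 0$, $i\le n$; only the $\ord(n^{2})$ pairs with $m>-i$ can make $r_i^{(m)}$ positive, and by shift invariance each deviation $\tilde r_i(\sigma^{m}x)-i\log_{pq}p$ carries the common exponential tail. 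Since the expected maximum of $\ord(n^{2})$ nonnegative variables with a shared exponential tail is $\ord(\log n)$, I get $\int \Lambda_{n}^{-}\, d\mu \le n\log_{pq}p+\ord(\log n)$. Dividing by $n$ and letting $n\to\infty$ squeezes $\liminf_{n} I_{n,\mu}^{-}(\Mul_{p,pq})/n$ to $\log_{pq}p$, which is precisely $I_{\mu}^{-}(\Mul_{p,pq})=\log_{pq}p$.

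The main obstacle is the tail estimate of the second paragraph: converting the heuristic ``carries longer than $i\log_{pq}p$ are exponentially rare'' into a rigorous bound requires tracking how the uniform suffix distribution pushes $\lfloor N/q^{i}\rfloor$ across multiples of $pq$, and it is here that the coprimality hypothesis is indispensable. The remaining bookkeeping — identifying $\Lambda_{n}^{-}$ with $\max_{m,i} r_i^{(m)}$ and passing from pointwise tails to the expected maximum — is routine once that estimate is secured.
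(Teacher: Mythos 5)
Your proposal is correct in outline and, at bottom, runs on the same engine as the paper's proof: both use Lemma~\ref{godometer} to convert ``do the digits $x[m]\cdots x[s]$ determine $\Mul_{p,pq}^i(x)[m]$?'' into ``does the integer interval obtained by freezing that prefix and varying the suffix contain a multiple of $q^i$ in its interior?'', and both then count such multiples. The difference is bookkeeping. The paper first observes that $\Lambda_n^-$ is constant on cylinders $\cyl(w,0)$ and that only output position $0$ matters: since the CA has memory $0$ and anticipation $1$, a difference created at coordinates $\geq i$ that reaches some coordinate $j<0$ within $n$ steps must cross coordinate $0$ at some time in $[i,n]$. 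This collapses your double maximum over $(m,i)$ to the single parameter $i$ and permits exact counting, $\abs{d_n(i)}=(pq)^i$ below the threshold $\kappa=\lfloor n-n\log_{pq}q+1\rfloor$ and $\abs{d_n(i)}=(pq)p^n-qp^i$ above it, followed by an Abel summation giving $I_{n,\mu}^-=\kappa+\ord(1)$. Your version keeps all positions $m\leq 0$ and pays with a union bound over $\ord(n^2)$ pairs and an $\ord(\log n)$ error term, which still suffices after dividing by $n$; it avoids the position-$0$ reduction at the cost of a weaker remainder.

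The step you flag as the main obstacle is, however, much easier than you fear, and your diagnosis of it is off. The uniform exponential tail needs no equidistribution and no coprimality: freezing $z[0]\cdots z[s-1]$ confines $N=\integ(z[0]\cdots z[i])$ to one of the $(pq)^s$ disjoint aligned intervals of length $(pq)^{i+1-s}$; the value $\lfloor N/q^i\rfloor\bmod pq$ fails to be constant on such an interval precisely when a multiple of $q^i$ lies in its interior (a non-constant floor takes two consecutive values, which already differ modulo $pq$); and each of the $p^{i+1}q$ multiples of $q^i$ in $[0,(pq)^{i+1})$ lies in the interior of at most one such interval. Hence $\mu(\tilde r_i\geq s)\leq p^{i+1}q/(pq)^s$, which for $s=\lceil i\log_{pq}p\rceil+u$ is at most $pq\,(pq)^{-u}$ --- exactly the tail you wanted, by a one-line union bound. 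Coprimality is genuinely used in the paper, but elsewhere: in evaluating $\abs{d_n(i)}$ \emph{exactly} above the threshold, where ``divisible by $q^n$ and by $(pq)^{n+1-i}$'' must be equivalent to ``divisible by $q^np^{n+1-i}$'', i.e. one needs $\gcd(q^n,p^{n+1-i})=1$. With the union bound in place, together with your deterministic bound $\tilde r_i\geq i\log_{pq}p-\ord(1)$ (an interval longer than $q^i$ must contain an interior multiple), your argument closes completely.
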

\begin{proof}
First note that for any $n\in\Npos$ and any $w\in \digs^{n+1}$ the equality $\Lambda_n^{-}(x)=\Lambda_n^{-}(y)$ holds for each pair $x,y\in\cyl(w,0)$, so we may define the quantity $\Lambda_n^{-}(w)=\Lambda_n^{-}(x)$ for $x\in\cyl(w,0)$. For any $i\in\N$ denote $(\Lambda_n^{-})^{-1}(i)=\{x\in\digs_{pq}^\Z\mid \Lambda_n^{-}(x)=i\}$. Then, note that always $\Lambda_n^{-}(x)\leq n$ and define for $0\leq i\leq n$
\[P_n(i)=\{w\in \digs_{pq}^{n+1}\mid \Lambda_n^{-}(w)=i\}\]
which form a partition of $\digs_{pq}^{n+1}$.
From these definitions it follows that
\[I_{n,\mu}^{-}=\int_{x\in \digs_{pq}^\Z}\Lambda_n^{-}(x) d\mu=\sum_{i=0}^{\infty} i\mu((\Lambda_n^{-})^{-1}(i))=(pq)^{-(n+1)}\sum_{i=0}^{n} i\abs{P_n(i)}.\]
To compute $\abs{P_n(i)}$ we define an auxiliary quantity
\[p_n(i)=\{w\in \digs_{pq}^{n+1}\mid i\leq \Lambda_n^{-}(w)\leq n\}:\]
then clearly $P_n(n)=p_n(n)$ and $P_n(i)=p_n(i)\setminus p_n(i+1)$ for $0\leq i<n$. Note that $w\in p_n(i)$ $(0\leq i\leq n)$ is equivalent to the existence of words $u\in \digs_{pq}^i$, $v_1,v_2\in \digs_{pq}^{n+1-i}$ such that $w=uv_1$ and $\mul_{p,pq}^t(uv_1)[1]\neq\mul_{p,pq}^t(uv_2)[1]$ for some $i\leq t\leq n$. By denoting
\[d_n(i)=\{u\in \digs_{pq}^{i}\mid \exists v_1,v_2\in A^{n+1-i},t\in[i,n]:\mul_{p,pq}^t(uv_1)[1]\neq \mul_{p,pq}^t(uv_2)[1]\},\]
it follows that $\abs{p_n(i)}=(pq)^{n+1-i}\abs{d_n(i)}$. By Lemma \ref{godometer}, for a word $u\in \digs_{pq}^i$ the condition $u\in d_n(i)$ is equivalent to the existence of a number divisible by $q^t$ on the open interval $J(u)_t=(\integ(u)(pq)^{t+1-i},(\integ(u)+1)(pq)^{t+1-i})$ for some $t\in[i,n]$. Furthermore, if an integer $m$ is divisible by $q^t$ and $m\in J(u)_t$, then $m(pq)^{n-t}\in J(u)_n$ is divisible by $q^n$. Thus it is sufficient to consider only the interval $J(u)_n$. We use this to compute $\abs{d_n(i)}$.

In the case $(pq)^{n+1-i}>q^n$ (equivalently: $n\log_{pq}q+i<n+1$) each interval $J(u)_n$ contains a number divisible by $q^n$ and therefore $\abs{d_n(i)}=(pq)^i$.

In the case $(pq)^{n+1-i}<q^n$ (equivalently: $n\log_{pq}q+i>n+1$) each interval $J(u)_n$ contains at most one number divisible by
$q^n$. Then $\abs{d_n(i)}$ equals the number of elements on the interval $[0,(pq)^{n+1})$ which are divisible by $q^n$ but not divisible by $(pq)^{n+1-i}$. Divisibility by both $q^n$ and $(pq)^{n+1-i}$ is equivalent to divisibility by $q^n p^{n+1-i}$ because $p$ and $q$ are coprime. Therefore $\abs{d_n(i)}=(pq)^{n+1}/q^n-(pq)^{n+1}/(q^n p^{n+1-i})=(pq)p^n-qp^i$.

Let us denote $\kappa=\left\lfloor n-n\log_{pq}q+1\right\rfloor$. We can see that when $i<\kappa$,
\begin{flalign*}
\abs{P_n(i)}&=\abs{p_n(i)}-\abs{p_n(i+1)}=(pq)^{n+1-i}\abs{d_n(i)}-(pq)^{n-i}\abs{d_n(i+1)} \\
&=(pq)^{n+1}-(pq)^{n+1}=0.
\end{flalign*}
We may compute

\begin{flalign*}
&(pq)^{n+1}I_{n,\mu}^{-}=\sum_{i=0}^{\kappa-1} i\abs{P_n(i)}+ \sum_{i=\kappa}^{n} i\abs{P_n(i)} \\
&=n\abs{p_n(n)}+\sum_{i=\kappa}^{n-1}i(\abs{p_n(i)}-\abs{p_n(i+1)})=\kappa\abs{p_n(\kappa)}+\sum_{i=\kappa+1}^{n}\abs{p_n(i)},
\end{flalign*}
in which
\[\kappa\abs{p_n(\kappa)}=\kappa(pq)^{n+1-\kappa}\abs{d_n(\kappa)}=\kappa(pq)^{n+1-\kappa}(pq)^\kappa=\kappa(pq)^{n+1}\] 
and
\begin{flalign*}
&\sum_{i=\kappa+1}^{n}\abs{p_n(i)}=\sum_{i=\kappa+1}^{n}(pq)^{n+1-i}\abs{d_n(i)}=\sum_{i=\kappa+1}^{n}(pq)^{n+1-i}((pq)p^n-qp^i) \\
&=(pq)p^n\sum_{i=\kappa+1}^{n}(pq)^{n+1-i}-q(pq)^{n+1}\sum_{i=\kappa+1}^{n}q^{-i}\leq(pq)p^n(pq)^{n-\kappa}\sum_{i=0}^{\infty}(pq)^{-i} \\
&\leq 2(pq)p^n(pq)^{n-(n-n\log_{pq}q+1)+1}\leq 2(pq)p^n(pq)^{\log_{pq}q^n}=2(pq)^{n+1}.
\end{flalign*}

Finally, the left average Lyapunov exponent is

\begin{flalign*}
I_{\mu}^{-}&=\lim_{n\to\infty}\frac{I_{n,\mu}^{-}}{n}=\lim_{n\to\infty}\frac{\kappa\abs{p_n(\kappa)}}{(pq)^{n+1}n}+\lim_{n\to\infty}\frac{\sum_{i=\kappa+1}^{n}\abs{p_n(i)}}{(pq)^{n+1}n}=\lim_{n\to\infty}\frac{\kappa}{n} \\
&=1-\log_{pq}q=\log_{pq}p.
\end{flalign*}

\end{proof}

\begin{remark}
Multiplication automata can also be defined more generally. Denote by $\Mul_{\alpha,n}:\digs_n^\Z\to\digs_n^\Z$ the CA that performs multiplication by $\alpha\in\Rpos$ in base $n\in\N$, $n>1$ (if it exists). A characterization of all admissible pairs $\alpha,n$ can be extracted from the paper~\cite{BHM96}, which considers multiplication automata on one-sided configuration spaces $\digs_n^\N$. We believe that $I_{\mu}^{-}(\Mul_{\alpha,n})=\log_{n}\alpha$ for all $\alpha\geq 1$ and all natural numbers $n>1$ such that $\Mul_{\alpha,n}$ is defined (when $\mu$ is the uniform measure of $\digs_n^\Z$). Replacing the application of Lemma~\ref{godometer} by an application of Lemma~5.7 from \cite{KK17} probably yields the result for $\Mul_{p/q,pq}$ when $p>q>1$ are coprime. A unified approach to cover the general case would be desirable.
\end{remark}

\section{Conclusions}
We have shown that the Lyapunov exponents of a given reversible cellular automaton on a full shift cannot be computed to arbitrary precision. Ultimately this turned out to follow from the fact that the tiling problem for 2-way deterministic Wang tile sets reduces to the problem of computing the Lyapunov exponents of reversible CA. Note that the result does not restrict the size of the alphabet $A$ of the CA $F:A^\Z\to A^\Z$ whose Lyapunov exponents are to be determined. Standard encoding methods might be sufficient to solve the following problem.

\begin{problem}
Is there a fixed full shift $A^\Z$ such that the Lyapunov exponents of a given reversible CA $F:A^\Z\to A^\Z$ cannot be computed to arbitrary precision? Can we choose here $\abs{A}=2$? 
\end{problem}

In our constructions we controlled only the right exponent $\lambda^+(F)$ and let the left exponent $\lambda^-(F)$ vary freely. Controlling both Lyapunov exponents would be necessary to answer the following.

\begin{problem}Is it decidable whether the equality $\lambda^{+}(F) +\lambda^{-}(F)=0$ holds for a given reversible cellular automaton $F:A^\Z\to A^\Z$?\end{problem}

We mentioned in the introduction that there exists a single CA whose topological entropy is an uncomputable number. We ask whether a similar result holds also for the Lyapunov exponents.

\begin{problem}Does there exist a single cellular automaton $F:A^\Z\to A^\Z$ such that $\lambda^+(F)$ is an uncomputable number?\end{problem}

By an application of Fekete's lemma the limit that defines $\lambda^+(F)$ is actually the infimum of a sequence whose elements are easily computable when $F:A^\Z\to A^\Z$ is a CA on a full shift. This yields the natural obstruction that $\lambda^+(F)$ has to be an upper semicomputable number. We are not aware of a cellular automaton on a full shift that has an irrational Lyapunov exponent (see Question 5.7 in \cite{CFK19}), so constructing such a CA (or proving the impossibility of such a construction) should be the first step. This problem has an answer for CA $F:X\to X$ on general subshifts $X$, and furthermore for every real $t\geq 0$ there is a subshift $X_t$ and a reversible CA $F_t:X_t\to X_t$ such that $\lambda^+(F_t)=\lambda^-(F_t)=t$ \cite{Hoch11}.

In the previous section we computed that the average Lyapunov exponent $I_{\mu}^{-}$ of the multiplication automaton $\Mul_{p,pq}$ is equal to $\log_{pq}p$ when $p,q>1$ are coprime integers. This in particular shows that average Lyapunov exponents can be irrational numbers. We do not know whether such examples can be found in earlier literature.

\bibliographystyle{plain}
\bibliography{mybib}{}

\end{document}